\documentclass{article}
\usepackage{latexsym,amsfonts,amsmath,amsthm,makeidx}
\usepackage{makeidx}

\makeindex
\newtheorem{definition}{Definition}[section]
\newtheorem{theorem}{Theorem}[section]
\newtheorem{lemma}{Lemma}[section]
\newtheorem{corollary}{Corollary}[section]
\newtheorem{proposition}{Proposition}[section]
\newtheorem{remark}{Remark}[section]
\newtheorem{example}{Example}[section]
\newcommand{\RN}{\mathbb R^N}

\newcommand{\iy}{\infty}

\newcommand{\s}{\section}
\newcommand{\dd}{\delta}
\newcommand{\DD}{\Delta}
\newcommand{\g}{\gamma}
\newcommand{\G}{\Gamma}
\newcommand{\na}{\nabla}

\newcommand{\la}{\lambda}

\newcommand{\pa}{\partial}
\newcommand{\si}{\sigma}

\newcommand{\R}{\mathbb R}

\newcommand{\ti}{\tilde}

\newcommand{\rg}{\rightarrow}

\newcommand{\e}{\varepsilon}
\newcommand{\vp}{\varphi}

\newcommand{\lab}{\label}
\newcommand{\bt}{\begin{theorem}}
\newcommand{\et}{\end{theorem}}
\newcommand{\bl}{\begin{lemma}}
\newcommand{\el}{\end{lemma}}
\newcommand{\bd}{\begin{definition}}
\newcommand{\ed}{\end{definition}}
\newcommand{\bc}{\begin{corollary}}
\newcommand{\ec}{\end{corollary}}
\newcommand{\bp}{\begin{proof}}
\newcommand{\ep}{\end{proof}}
\newcommand{\bx}{\begin{example}}
\newcommand{\ex}{\end{example}}
\newcommand{\bi}{\begin{exercise}}
\newcommand{\ei}{\end{exercise}}
\newcommand{\bo}{\begin{proposition}}
\newcommand{\eo}{\end{proposition}}
\newcommand{\br}{\begin{remark}}
\newcommand{\er}{\end{remark}}
\newcommand{\be}{\begin{equation}}
\newcommand{\ee}{\end{equation}}
\newcommand{\ba}{\begin{align}}
\newcommand{\ea}{\end{align}}
\newcommand{\bn}{\begin{enumerate}}
\newcommand{\en}{\end{enumerate}}
\newcommand{\bg}{\begin{align*}}
\newcommand{\bcs}{\begin{cases}}
\newcommand{\ecs}{\end{cases}}

\newcommand{\Sg}{\Sigma}

\newcommand{\bean}{\begin{eqnarray*}}
\newcommand{\eean}{\end{eqnarray*}}

%%%%%%%%%%%%%%%%%%%%%%%%%%%%%%%%%%%%%%%%%%%%%%%%%%%%%%%%%%%%%%%%%%%%%%%%%%%%
%%%%%%%%%%%%%%%%%%%%%%%%%%%%%%%%%%%%%%%%%%%%%%%%%%%%%%%%%%%%%%%%%%%%%%%%%%%%
%%%%%%%%%%%%%%%%%%%%%%%%%%%%%%%%%%%%%%%%%%%%%%%%%%%%%%%%%%%%%%%%%%%%%%%%%%%%

\numberwithin{equation}{section}
\begin{document}

\title{\bf{Infinitely many sign-changing solutions for the nonlinear
Schr\"{o}dinger-Poisson system}}
\date{}
\author{{\bf Zhaoli Liu$^1$\thanks{Supported by NSFC (11271265, 11331010)
and BCMIIS. Email: zliu@cnu.edu.cn}, Zhi-Qiang Wang$^{
3}$\thanks{Supported by NSFC (11271201). Email:
zhi-qiang.wang@usu.edu}, and Jianjun Zhang$^{2}$\thanks{Supported by
CPSF (2013M530868). Email: zhangjianjun09@tsinghua.org.cn}}\\\\
\footnotesize {\it $^1$School of Mathematical Sciences, Capital
Normal University, Beijing 100037, PR China}\\
\footnotesize {\it $^2$Chern Institute of Mathematics, Nankai
University, Tianjin 300071, PR China}\\
\footnotesize {\it $^3$Department of Mathematics and Statistics,
Utah State University, Logan, Utah 84322, USA}}

\numberwithin{equation}{section}

\maketitle %\vskip0.36in

\begin{center}
\begin{minipage}{120mm}
\begin{center}{\bf Abstract}\end{center}

In this paper, we consider the following Schr\"{o}dinger-Poisson
system
\begin{eqnarray*} \left\{
\begin{array}{ll}
-\Delta u+V(x)u+\phi u=f(u)&\mbox{in}\ \R^3,\\
-\DD\phi=u^2&\mbox{in}\ \R^3.
\end{array}
\right.
\end{eqnarray*}
We investigate the existence of multiple bound state solutions, in
particular sign-changing solutions. By using the method of invariant
sets of descending flow, we prove that this system has infinitely
many sign-changing solutions. In particular, the nonlinear term
includes the power-type nonlinearity $f(u)=|u|^{p-2}u$ for the
well-studied case $p\in(4,6)$, and the less-studied case
$p\in(3,4)$, and for the latter case few existence results are
available in the literature.

%Moreover, when the
%potential $V(x)$ is non-radially symmetric, the
%sign-changing solutions obtained are non-radially symmetric.\\

\end{minipage}
\end{center}

%\vskip0.16in

\s{Introduction and main results}
\renewcommand{\theequation}{1.\arabic{equation}}

In this paper, we are concerned with the existence of bound state
solutions, in particular sign-changing solutions, to the following
nonlinear Schr\"{o}dinger-Poisson system
\begin{equation}\lab{q1} \left\{
\begin{array}{ll}
-\Delta u+V(x)u+\phi u=f(u)&\mbox{in}\ \R^3,\\
-\DD\phi=u^2&\mbox{in}\ \R^3.
\end{array}
\right.
\end{equation}
In the last two decades, system (\ref{q1}) has been studied
extensively due to its strong physical background. From a physical
point of view, it describes systems of identical charged particles
interacting each other in the case that magnetic effects could be
ignored and its solution is a standing wave for such a system. The
nonlinear term $f$ models the interaction between the particles
\cite{Vaira}. The first equation of (\ref{q1}) is coupled with a
Poisson equation, which means that the potential is determined by
the charge of the wave function. The term $\phi u$ is nonlocal and
concerns the interaction with the electric field. For more detailed
physical aspects of systems like (\ref{q1}) and for further
mathematical and physical interpretation, we refer to
\cite{Ruiz1,Benci1,Benci2} and the references therein.

In recent years, there has been increasing attention to systems like
(\ref{q1}) on the existence of positive solutions, ground states,
radial and non-radial solutions and semiclassical states. Ruiz
\cite{Ruiz} considered the following problem
\begin{equation}\lab{qqqq1} \left\{
\begin{array}{ll}
-\Delta u+u+\lambda\phi u=|u|^{p-2}u&\mbox{in}\ \R^3,\\
-\DD\phi=u^2&\mbox{in}\ \R^3
\end{array}
\right.
\end{equation}
and gave existence and nonexistence results, depending on the
parameters $p\in(2,6)$ and $\lambda>0$. In particular, if
$\la\ge\frac{1}{4}$, the author showed that $p=3$ is a critical
value for the existence of positive solutions. By using the
concentration compactness principle, Azzollini and Pomponio
\cite{Az1} proved the existence of a ground state solution of
(\ref{q1}) when $f(u)=|u|^{p-2}u$ and $p\in(3,6)$. But no symmetry
information concerning this ground state solution was given. In
\cite{Ruiz2}, Ruiz studied the profile of the radial ground state
solutions to (\ref{qqqq1}) as $\lambda\rg 0$ for
$p\in(\frac{18}{7},3)$. Using variational method together with a
perturbation argument, Ambrosetti \cite{Am} investigated the
multiplicity of solutions and semiclassical states to systems like
(\ref{q1}). Here, we would also like to mention the papers
\cite{Az,Wei,T,Ianni3,Peng,Wang1} for related topics.

Another topic which has increasingly received interest in recent
years is the existence of sign-changing solutions of systems like
(\ref{q1}). Recall that a solution $(u,\phi)$ to \eqref{q1} is
called a sign-changing solution if $u$ changes its sign. Using a
Nehari-type manifold and gluing solution pieces together, Kim and
Seok \cite{Kim} proved the existence of radial sign-changing
solutions with prescribed numbers of nodal domains for (\ref{q1}) in
the case where $V(x)=1$, $f(u)=|u|^{p-2}u$, and $p\in(4,6)$. Ianni
\cite{Ianni1} obtained a similar result to \cite{Kim} for
$p\in[4,6)$, via a heat flow approach together with a limit
procedure. Recently, with a Lyapunov-Schmidt reduction argument,
Ianni and Vaira \cite{Ianni2} constructed non-radial multi-peak
solutions with arbitrary large numbers of positive peaks and
arbitrary large numbers of negative peaks to the
Schr\"{o}dinger-Poisson system
\begin{equation}\lab{qqqq} \left\{
\begin{array}{ll}
-\e^2\Delta u+u+\phi u=f(u)&\mbox{in}\ \R^N,\\
-\DD\phi=a_Nu^2&\mbox{in}\ \R^N
\end{array}
\right.
\end{equation}
for $\epsilon>0$ small, where $3\le N\le6$ and $a_N$ is a positive
constant. All the sign-changing solutions obtained in \cite{Kim,
Ianni1, Ianni2} have certain types of symmetries; they are either
$O(N)$-invariant or $G$-invariant for some finite subgroup $G$ of
$O(N)$ and thus the system is required to have a certain group
invariance. Based on variational method and Brouwer degree theory,
Wang and Zhou \cite{Zhou} obtained a least energy sign-changing
solution to (\ref{q1}) without any symmetry by seeking minimizer of
the energy functional on the sign-changing Nehari manifold when
$f(u)=|u|^{p-2}u$ and $p\in(4,6)$. More recently, in the case where
the system is considered on bounded domains $\Omega\subset\R^3$,
Alves and Souto \cite{Alves} obtained a similar result to
\cite{Zhou} for a more general nonlinear term $f$.

%By using
%the Lyapunov-Schmidt reduction method, the authors constructed the
%non-radial multi-peak solutions with an arbitrary large number of
%positive and negative peaks as $\e\rg 0$.

%Moreover, the authors showed that the energy of any
%sign-changing solution is strictly larger than the ground state
%energy.
%In \cite{Zhou}, the potential $V$ may not be radially
%symmetric, so the solution obtained may be non-radially symmetric.

%If $\Omega$ is non-radially
%symmetric, the solution obtained in \cite{Alves} is non-radially
%symmetric.
%Given the extensive references above,

To the best of our knowledge, there is no result in the literature
on the existence of multiple sign-changing solutions as bound states
to problem (\ref{q1}) without any symmetry, and thus to prove the
existence of infinitely many sign-changing solutions to problem
(\ref{q1}) without any symmetry is the first purpose of the present
paper. Since the approaches in \cite{Alves,Ianni1,Kim,Zhou}, when
applied to the monomial nonlinearity $f(u)=|u|^{p-2}u$, are only
valid for $p\geq4$, we want to provide an argument which covers the
case $p\in(3,4)$ and this is the second purpose of the present
paper. Moreover, our method does not depend on existence of the
Nehari manifold.

%It has been an interesting open question whether the similar results
%hold for $p<4$. These all still remain interesting open questions.
%In the present paper, we give an affirmative answers to some of
%these questions.

In what follows, we assume $V\in C(\R^3,\R^+)$ satisfies the
following condition.
\begin{itemize}
\item [($V_0$)] $V$ is coercive, i.e., $\lim\limits_{|x|\rg\iy}V(x)=\iy$.
\end{itemize}
Moreover, we assume $f$ satisfies the following hypotheses.
\begin{itemize}
\item [($f_1$)] $f\in C(\R,\R)$ and
   $\lim\limits_{s\rightarrow 0}\frac{f(s)}{s}=0$.
\item [($f_2$)] $\limsup\limits_{|s|\rightarrow
   +\infty}\frac{|f(s)|}{|s|^{p-1}}<\iy$ for some $p\in (3,6)$.
\item [($f_3$)] There exists $\mu>3$ such that
   $tf(t)\ge \mu F(t)>0$ for all $t\not=0$, where $F(t)=\int_0^tf(s)ds$.
\end{itemize}

As a consequence of $(f_2)$ and $(f_3)$, one has $3<\mu\leq p<6$.
Our first result reads as

\begin{theorem}\lab{Th1}
If $(V_0)$ and $(f_1)$-$(f_3)$ hold and $\mu>4$, then problem
\eqref{q1} has one sign-changing solution. If moreover $f$ is odd,
then problem \eqref{q1} has infinitely many sign-changing solutions.
\end{theorem}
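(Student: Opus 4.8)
The plan is to realize solutions of \eqref{q1} as critical points of the reduced energy functional obtained after solving the Poisson equation, and then apply the method of invariant sets of descending flow to produce sign-changing critical points. First I would set up the functional framework: let $E=\{u\in H^1(\R^3):\int_{\R^3}V(x)u^2\,dx<\iy\}$ with the natural norm; the coercivity condition $(V_0)$ ensures that the embedding $E\hookrightarrow L^q(\R^3)$ is compact for $q\in[2,6)$. For each $u\in E$, the second equation has a unique solution $\phi_u\in D^{1,2}(\R^3)$ given by the Newtonian potential of $u^2$, and $u\mapsto\phi_u$ is continuous, maps bounded sets to bounded sets, and $\phi_u\ge0$. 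Substituting back, solutions of \eqref{q1} correspond to critical points of
\[
J(u)=\frac12\int_{\R^3}\bigl(|\na u|^2+V(x)u^2\bigr)\,dx+\frac14\int_{\R^3}\phi_u u^2\,dx-\int_{\R^3}F(u)\,dx,
\]
which is $C^1$ on $E$ under $(f_1)$–$(f_2)$.

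Next I would verify the compactness and geometric hypotheses needed for the abstract machinery. The Palais–Smale (or Cerri) condition: using $(f_3)$ with $\mu>4$, a standard computation gives $J(u)-\frac1\mu\lan J'(u),u\ran\ge (\frac12-\frac1\mu)\|u\|_E^2+(\frac14-\frac1\mu)\int\phi_u u^2$, and since $\mu>4$ both coefficients are positive, so PS sequences are bounded; compactness of the embedding and continuity of $u\mapsto\phi_u$ then give a convergent subsequence. The key structural ingredient for invariant sets of descending flow is the auxiliary operator $A:E\to E$ defined by letting $v=A(u)$ be the unique solution of $-\DD v+V(x)v+\phi_u v=f(u)$; one checks $A$ is well-defined, continuous, compact, that fixed points of $A$ are exactly the critical points of $J$, that $\lan J'(u),u-A(u)\ran\ge c\|u-A(u)\|_E^2$, and crucially that $A$ maps a neighborhood of the positive cone $P^+=\{u\ge0\}$ into $P^+$ and similarly for $P^-$ (this uses $\phi_u\ge0$ and $(f_1)$ so that the linear operator $-\DD+V+\phi_u$ is positive and order-preserving, plus the sublinearity-at-zero of $f$). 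Thus $\pm P^\dd$, suitable $\dd$-neighborhoods of $\pm P^+$, are (quasi-)invariant under the descending flow generated from $A$, and sign-changing critical points are exactly those lying outside $P^\dd\cup(-P^\dd)$.

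With this in place I would invoke the abstract critical-point theorem for invariant sets of descending flow: the first part (one sign-changing solution) follows from a linking/mountain-pass argument in the complement of the two invariant cones — one constructs a path or map from a simplex that must exit $P^\dd\cup(-P^\dd)$, using $(f_3)$ to get $J\to-\iy$ along rays and $(f_1)$–$(f_2)$ to get a positive lower bound on a sphere away from the cones. For the second part, when $f$ is odd $J$ is even, the cones satisfy $-(P^\dd)=-P^\dd$, and the symmetric version of the theorem (a $\BZ_2$-genus based minimax over classes of symmetric sets avoiding $\pm P^\dd$) yields an unbounded sequence of critical values with corresponding sign-changing critical points, hence infinitely many sign-changing solutions.

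The main obstacle I anticipate is establishing the invariance of the cones for the auxiliary operator $A$, because the nonlocal term $\phi_u u$ couples the sign of $u$ into the equation defining $A(u)$: one must show that if $u$ is close to $P^+$ then $A(u)$ is still close to $P^+$, uniformly, despite the fact that $\phi_u$ depends on $u$ and not on the argument being projected. This requires careful estimates showing $\operatorname{dist}(A(u),P^+)\le \alpha\,\operatorname{dist}(u,P^+)$ with $\alpha<1$ on a small neighborhood, exploiting that $\|\phi_u\|$ is controlled by $\|u\|$ and that the Green's operator for $-\DD+V+\phi_u$ is positivity-preserving with a bound independent of the (small) perturbation $\phi_u$. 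A secondary delicate point is that $\mu>4$ is used precisely to keep the coefficient $\frac14-\frac1\mu$ of the nonlocal term positive in the PS estimate; relaxing this to $\mu>3$ (the case $p\in(3,4)$) would need a different, more refined argument, which is presumably the content of a later theorem in the paper.
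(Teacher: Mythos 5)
Your proposal is correct and follows essentially the same route as the paper: the same reduction to a single equation, the auxiliary operator $A(u)$ solving $-\DD v+V v+\phi_u v=f(u)$, the contraction estimate $\mbox{dist}(A(u),P^\pm_\e)\le\tfrac12\,\mbox{dist}(u,P^\pm_\e)$ giving invariance of the cone neighborhoods, the (PS) boundedness from $\mu>4$, and the simplex-linking and genus-type minimax theorems in the presence of invariant sets (Theorems A and B of Liu--Liu--Wang). The only detail you gloss over is that $A$ is merely continuous (since $f\in C(\R,\R)$), so the flow cannot be generated by $A$ itself; the paper first replaces $A$ by a locally Lipschitz operator $B$ inheriting its properties, a standard step.
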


%\begin{remark} When the potential $V(x)$ is not radially symmetric, the
%solutions obtained in Theorem \ref{Th1} are non-radially symmetric.
%Theorem \ref{Th1} seems to be the first attempt to obtain infinitely
%many non-radially symmetric sign-changing solutions to (\ref{q1}).
%\end{remark}

\begin{remark}
{\rm Assumption $(V_0)$ is used only in deriving compactness (the
(PS) condition) of the energy functional associated to (\ref{q1}).
If $\R^3$ in problem (\ref{q1}) is replaced with a smooth bounded
domain $\Omega\subset\R^3$, Theorem \ref{Th1} without $(V_0)$ and
any symmetry assumption on $\Omega$ still holds.}
\end{remark}

$(f_3)$ is the so-called Ambrosetti-Rabinowitz condition ((AR) for
short). Since the nonlocal term $\int_{\R^3}\phi_uu^2$ in the
expression of $I$ (see Section 2) is homogeneous of degree 4, if
$\mu$ from $(f_3)$ satisfies $\mu>4$ then (AR) guarantees
boundedness of (PS)-sequences as well as existence of a mountain
pass geometry in the sense that $I(tu)\to -\infty$ as $ t\to \infty$
for each $u\neq 0$. If $\mu<4$, (PS)-sequences may not be bounded
and one has $I(tu)\to \infty$ as $ t\to \infty$ for each $u\neq 0$.
To overcome these difficulties in the case $\mu<4$ we impose on $V$
an additional condition
\begin{itemize}
\item [($V_1$)] $V$ is differentiable, $\na V(x)\cdot x\in
L^r(\R^3)$ for some $r\in[\frac{3}{2},\iy]$ and
$$
2V(x)+\na V(x)\cdot x\ge 0\ \mbox{for a.e.}\ x\in\R^3.
$$
\end{itemize}
This assumption was introduced in \cite{ZZ1,ZZ2} in order to prove
compactness with the monotonicity trick of Jeanjean \cite{Jean}.
That $\na V(x)\cdot x\in L^r(\R^3)$ for some $r\in[\frac{3}{2},\iy]$
plays a role only in deriving the Pohoz$\check{\rm a}$ev identity
for solutions of \eqref{m} in Section 4, and it can clearly be
weakened since solutions of \eqref{m} decay at infinity.
Nevertheless, we do not want to go further in that direction. We
state our second result as follows.

%Note that the nonlocal term $\int_{\R^3}\phi_uu^2$ is homogeneous of
%degree 4. Thus, if $\mu>4$, $(f_3)$ is the so-called {\it
%Ambrosetti-Rabinowitz} condition (short for $(AR)$), which is
%usually to guarantee the boundedness of the (PS)-sequence. In terms
%of level set structure the variational functional $I$ has the
%property $I(tu)\to -\infty$ as $ t\to \infty$ for each $u\neq 0$. If
%$\mu<4$, $(AR)$ does not hold and the problem becomes even more
%delicate and there are additional difficulties in applying the
%variational methods due to the lack of boundedness of the
%(PS)-sequence. For the level set geometry it has the reverse the
%behavior $I(tu)\to \infty$ as $ t\to \infty$ for each $u\neq 0$. To
%overcome these difficulties we impose on $V$ an additional condition

\begin{theorem}\lab{Th2}
If $(V_0)$-$(V_1)$ and $(f_1)$-$(f_3)$ hold, then problem \eqref{q1}
has one sign-changing solution. If in addition $f$ is odd, then
problem \eqref{q1} has infinitely many sign-changing solutions.
\end{theorem}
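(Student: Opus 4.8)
The plan is to run the method of invariant sets of descending flow not for the energy functional of \eqref{q1} itself but for the truncated family $I_\lambda$ produced by Jeanjean's monotonicity trick, and then to let $\lambda\to1$, using the Pohoz$\check{\rm a}$ev identity of Section~4 to recover the (PS)-compactness that is lost when $\mu\le4$. On $E:=\{u\in H^1(\R^3):\int_{\R^3}Vu^2<\iy\}$, with $\lan u,v\ran=\int_{\R^3}(\na u\cdot\na v+Vuv)$, assumption $(V_0)$ makes $E\hookrightarrow L^q(\R^3)$ compact for every $q\in[2,6)$; for $u\in E$ let $\phi_u\in D^{1,2}(\R^3)$ be the unique solution of $-\DD\phi=u^2$, so that the solutions of \eqref{q1} correspond to the critical points of $I(u)=\frac12\nor u^2+\frac14\int_{\R^3}\phi_uu^2-\int_{\R^3}F(u)$. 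Since the nonlocal term is quartic and nonnegative, when $\mu\le4$ this $I$ need not admit bounded Palais--Smale sequences, so I would instead work with
\[
I_\lambda(u)=\tfrac12\nor u^2+\tfrac14\int_{\R^3}\phi_uu^2-\lambda\int_{\R^3}F(u),\qquad\lambda\in[\dd,1],
\]
for a fixed $\dd\in(0,1)$: this has the structure $A(u)-\lambda B(u)$ with $B\ge0$ required by the monotonicity trick, with $A(u)\to\iy$ as $\nor u\to\iy$, and with $\lambda\mapsto I_\lambda$ pointwise non-increasing.

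Next I would set up the cone structure that detects sign changes. For each $\lambda$ define $T_\lambda:E\to E$ by letting $T_\lambda u$ be the unique solution $w$ of $-\DD w+Vw+\phi_uw=\lambda f(u)$; by the compact embeddings, $T_\lambda$ is continuous and maps bounded sets to relatively compact ones, its fixed-point set coincides with the critical set of $I_\lambda$, and $u-T_\lambda u$ is a pseudo-gradient-type descent vector for $I_\lambda$. Because $\phi_u\ge0$ and, by $(f_3)$, $\operatorname{sgn}f(s)=\operatorname{sgn}s$, the maximum principle together with $(f_1)$--$(f_2)$ and the Sobolev inequality gives $T_\lambda(\pm P_\e)\su\pm P_{\e/2}$ for small $\e>0$, uniformly in $\lambda\in[\dd,1]$, where $P=\{u\in E:u\ge0\}$ and $\pm P_\e$ denote the open $\e$-neighbourhoods of $\pm P$. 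Regularising $T_\lambda$ into a locally Lipschitz vector field, one obtains a descending flow for $I_\lambda$ under which $W:=\ov{P_\e}\cup\ov{-P_\e}$ is positively invariant (and which is odd when $f$ is odd), so that any critical point lying outside $W$ changes sign, with $\operatorname{dist}(u,\pm P)\ge\e$.

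Now for the minimax and the monotonicity trick. When $f$ is odd, $I_\lambda$ is even and the flow can be taken odd, and I would use, for integers $j\ge2$,
\[
c_{\lambda,j}:=\inf_{B\in\G_j}\ \sup_{u\in B\setminus W}\ I_\lambda(u),
\]
$\G_j$ being the family of closed symmetric subsets of $E$ whose part outside $W$ has Krasnoselskii genus at least $j$; for a single sign-changing solution (no oddness needed) I would instead use $c_\lambda:=\inf_{\gamma}\max_t I_\lambda(\gamma(t))$ over paths $\gamma$ joining a positive to a negative critical point of $I_\lambda$ (which exist by the same monotonicity-trick mechanism applied inside the invariant cones $\pm\ov P$). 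In either case the levels are finite, they depend monotonically on $j$, and $\lambda\mapsto c_{\lambda,j}$ (resp.\ $\lambda\mapsto c_\lambda$) is non-increasing, hence differentiable for a.e.\ $\lambda\in[\dd,1]$; at such a $\lambda$, the Jeanjean argument performed inside $E\setminus W$ by means of the $W$-respecting deformation of the invariant-set method (as in \cite{ZZ1,ZZ2}) produces a \emph{bounded} Palais--Smale sequence for $I_\lambda$ that stays outside $W$ at the corresponding level, and the compact embedding turns it into a sign-changing critical point of $I_\lambda$ at that level. Finally, choosing $\lambda_n\uparrow1$ in the full-measure set of common differentiability points and fixing $j$, the critical point $u_n$ so obtained solves $-\DD u_n+Vu_n+\phi_{u_n}u_n=\lambda_nf(u_n)$ and hence satisfies the Pohoz$\check{\rm a}$ev identity of Section~4 (this is what $\na V\cdot x\in L^r$ ensures); combining that identity with $\lan I_\lambda'(u_n),u_n\ran=0$, the energy bound $I_{\lambda_n}(u_n)\le c_{\dd,j}<\iy$, the sign condition $(f_3)$ with $\mu>3$, and $2V+\na V\cdot x\ge0$, a short elimination among these three scalar identities gives $\sup_n\nor{u_n}<\iy$. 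The compact embedding then yields $u_n\to u^{(j)}$ strongly along a subsequence, where $u^{(j)}$ is a critical point of $I=I_1$ with $\operatorname{dist}(u^{(j)},\pm P)\ge\e>0$, i.e.\ a sign-changing solution of \eqref{q1}; this already gives the first assertion. For the odd case, since $I_\lambda(u)\ge\frac12\nor u^2-\int_{\R^3}F(u)$ when $\lambda\le1$, the usual genus-intersection estimate provides a $\lambda$-independent lower bound $c_{\lambda,j}\ge\beta_j$ with $\beta_j\to\iy$, so that $I(u^{(j)})=\lim_n c_{\lambda_n,j}\ge\beta_j\to\iy$ and the $u^{(j)}$ are infinitely many distinct sign-changing solutions.

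I expect the main obstacle to be exactly the a priori bound $\sup_n\nor{u_n}<\iy$ along $\lambda_n\to1$: when $\mu\le4$, neither $(f_3)$ nor the energy bound alone suffices, and it is the Pohoz$\check{\rm a}$ev identity --- hence the structural hypothesis $(V_1)$ --- played against the Nehari identity and the energy identity that closes this estimate and takes over the role of the boundedness of Palais--Smale sequences. A second, more technical point is to tune the deformation used in the monotonicity trick so that it simultaneously respects the invariant sets $\pm P_\e$ and still yields a bounded Palais--Smale sequence, since the two requirements constrain the constructed vector field in competing ways.
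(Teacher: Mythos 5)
Your route is genuinely different from the paper's: you scale the nonlinearity, $I_\lambda(u)=\frac12\|u\|_E^2+\frac14\int_{\R^3}\phi_uu^2-\lambda\int_{\R^3}F(u)$ with $\lambda\uparrow 1$, and try to import compactness through Jeanjean's monotonicity trick, whereas the paper perturbs in the opposite direction, adding $\lambda|u|^{r-2}u$ with $r\in(p,6)$ growing faster than degree $4$ and letting $\lambda\to0^+$. The final limiting step you describe (Pohoz\v{a}ev $+$ Nehari $+$ energy, played against $(V_1)$ and $(f_3)$ with $3<\mu\leq p<6$) is essentially the same algebra as the paper's identities \eqref{ff1}--\eqref{ff4} and does close the a priori bound in your setting as well, since the levels are bounded above by monotonicity in $\lambda$.

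The genuine gap is earlier, in the claim that for (a.e.) fixed $\lambda<1$ you can run the invariant-set minimax for $I_\lambda$. Replacing $f$ by $\lambda f$ does not change the degree structure that causes the trouble when $\mu\le4$: in the standard estimate $I_\lambda(u)-\frac1\mu\langle I_\lambda'(u),u\rangle=(\frac12-\frac1\mu)\|u\|_E^2+(\frac14-\frac1\mu)\int\phi_uu^2+\lambda\int(\frac1\mu f(u)u-F(u))$ the coefficient of the quartic nonlocal term is still negative, so $I_\lambda$ has exactly the same possible lack of bounded (PS) sequences as $I$ itself, for every $\lambda\in[\dd,1]$. Consequently the quantitative lower bound of Lemma \ref{l3.3}/\ref{l3.33} (a uniform $\|u-A(u)\|_E\ge\beta$ on $I^{-1}[a,b]$ away from $K_c$), which is the engine behind the $W$-respecting deformation and hence behind the admissibility of $\{P_\e^+,P_\e^-\}$ required by Theorems A and B, is not available for your family; the monotonicity trick only supplies one bounded (PS) sequence at one specific level for a.e.\ $\lambda$, and converting that into a deformation that simultaneously preserves the cones $\pm P_\e$ and detects a sign-changing critical point (let alone the genus-type multiplicity scheme of Theorem B) is a substantive new piece of abstract machinery, not the ``tuning'' you flag at the end. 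This is precisely why the paper adds a term of growth $r>4$: it restores boundedness of (PS)-type sequences for every fixed $\lambda>0$ (Lemma \ref{l3.33} and the (PS) verification in Step 1 use the positive terms $(\frac14-\frac1\gamma)\int\phi_uu^2$ and $\lambda(\frac1\gamma-\frac1r)\|u\|_r^r$ with $\gamma\in(4,r)$ plus an interpolation argument), so that Theorems A and B apply verbatim to $I_\lambda$, and only the passage $\lambda\to0^+$ needs the Pohoz\v{a}ev device. A secondary gap in your odd case: since your limit is taken at $\lambda\to1$ where the levels $c_{\lambda,j}$ are smallest, the divergence $c_{1^-,j}\to\iy$ does not follow from monotonicity and would need a $\lambda$-uniform lower bound $\beta_j\to\iy$, which you assert via a ``usual genus-intersection estimate'' but do not establish; in the paper this point is automatic because the limit $\lambda\to0^+$ increases the levels and Theorem B already gives $c_j(\lambda)\to\iy$ for fixed $\lambda$.
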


\begin{remark}
{\rm The class of nonlinearities $f$ satisfying the assumptions of
Theorem \ref{Th2} includes the monomial nonlinearity
$f(u)=|u|^{p-2}u$ with $p\in(3,4)$. Even in this special case,
Theorem \ref{Th2} seems to be the first attempt in finding
sign-changing solutions to (\ref{q1}).}
\end{remark}

The idea of the proofs of Theorems \ref{Th1} and \ref{Th2} is to use
suitable minimax arguments in the presence of invariant sets of a
descending flow for the variational formulation. In particular we
make use of an abstract critical point theory developed by J. Liu,
X. Liu and Z.-Q. Wang \cite{Wang}. The method of invariant sets of
descending flow plays an important role in the study of
sign-changing solutions of elliptic problems; we refer to
\cite{Liu3,Liu2,Liu4,BPW,BW1,BW2,Liu1,LW} and the references
therein. However, with the presence of the coupling term $\phi u$,
the techniques of constructing invariant sets of descending flow in
\cite{Liu3,Liu2,Liu4,BPW,BW1,BW2,Liu1,LW} can not be directly
applied to system \eqref{q1}, which makes the problem more
complicated. The reason is that $\phi u$ is a non-local term and the
decomposition
$$
\int_{\R^3}\phi_u|u|^2=\int_{\R^3}\phi_{u^+}|u^+|^2+\int_{\R^3}\phi_{u^-}|u^-|^2
$$
does not hold in general for $u\in H^1(\R^3)$. To overcome this
difficulty, we adopt an idea from \cite{Wang} to construct an
auxiliary operator $A$ (See Section 2), which is the starting point
in constructing a pseudo-gradient vector field guaranteeing
existence of the desired invariant sets of the flow. Since $f\in
C(\R,\R)$ and $A$ is merely continuous, $A$ itself can not be used
to define the flow. Instead, $A$ is used in a similar way to
\cite{Liu4} to construct a locally Lipschitz continuous operator $B$
inheriting the main properties of $A$, and we use $B$ to define the
flow. Finally, by minimax arguments in the presence of invariant
sets we obtain the existence of sign-changing solutions to
(\ref{q1}), proving Theorem \ref{Th1}. For the proof of Theorem
\ref{Th2} the above framework is not directly applicable due to
changes of geometric nature of the variational formulation. We use a
perturbation approach by adding a term growing faster than monomial
of degree $4$ with a small coefficient $\lambda>0$. For the
perturbed problems we apply the program above to establish the
existence of multiple sign-changing solutions, and a convergence
argument allows us to pass limit to the original system.

%In \cite{Wang}, by combining the minimax method
%and the method of invariant sets of descending flows, the authors
%developed the classical mountain-pass theorem and the symmetric
%mountain-pass theorem, by which they obtained the existence of the
%multiple mixed states of the nodal solutions for the following
%nonlinear Schr\"{o}dinger systems of $k$ equations
%\begin{equation}\lab{qqqq2} \left\{
%\begin{array}{ll}
%-\Delta u_j+\la_ju_j=\sum_{i=1}^k\beta_{ij}u_i^2u_j\ \ \ \mbox{in}\ \R^N,\\
%u_j(x)\rg 0\ \ \mbox{as}\ \ |x|\rg 0,j=1,2,\cdots,k,
%\end{array}
%\right. \end{equation} where $N=2,3,k\ge 2,\la_j>0$ for $j=1,2,\cdots,k$,
%$\beta_{ij}$ are constants.
%where $\phi_u$ is defined in Section 2 and
%$u^+(x)=\max\{u(x),0\},u^-(x)=\min\{u(x),0\}$.
%we can construct
%a special descending flow for the energy functional associated to
%problem (\ref{q1}),
%critical point theorems introduced in \cite{Wang},

The paper is organized as follows. Section 2 contains the
variational framework of our problem and some preliminary properties
of $\phi_u$. Section 3 is devoted to the proof of Theorem \ref{Th1}.
In Section 4, we use a perturbation approach to prove Theorem
\ref{Th2}.\vskip0.1in

%%%%%%%%%%%%%%%%%%%%%%%%%%%%%%%%%%%%%%%%%%%%%%%%%%%%%%%%%%%%%%%%%%%%%%%%%%%%%%%%%
%%%%%%%%%%%%%%%%%%%%%%%%%%%%%%%%%%%%%%%%%%%%%%%%%%%%%%%%%%%%%%%%%%%%%%%%%%%%%%%%%
%%%%%%%%%%%%%%%%%%%%%%%%%%%%%%%%%%%%%%%%%%%%%%%%%%%%%%%%%%%%%%%%%%%%%%%%%%%%%%%%%
%%%%%%%%%%%%%%%%%%%%%%%%%%%%%%%%%%%%%%%%%%%%%%%%%%%%%%%%%%%%%%%%%%%%%%%%%%%%%%%%%

\s{Preliminaries and functional setting}
\renewcommand{\theequation}{2.\arabic{equation}}

In this paper, we make use of the following notations.
\begin{itemize}
\item [$\bullet$] $\|u\|_p:=\big(\int_{\R^3}|u|^p\big)^{1/p}$ for $p\in
[2,\infty)$ and $u\in L^p(\R^3)$;
\item [$\bullet$] $\|u\|:=\big(\|u\|_2^2+\|\nabla u\|_2^2\big)^{1/2}$ for $u\in
H^1(\R^3)$;
\item [$\bullet$] $C,C_j$ denote (possibly different) positive
constants.
\end{itemize}

For any given $u\in H^1(\R^3)$, the Lax-Milgram theorem implies that
there exists a unique $\phi_u\in \mathcal{D}^{1,2}(\R^3)$ such that
$-\DD\phi_u=u^2$. It is well known that
\begin{equation*}
\lab{t1} \phi_u(x)=\int_{\R^3}\frac{u^2(y)}{4\pi|x-y|}dy.
\end{equation*}
We now summarize some properties of $\phi_u$, which will be used
later. See, for instance, \cite{Ruiz} for a proof.

\begin{lemma}\lab{l0} \noindent
\begin{itemize}
\item [{\rm(1)}] $\phi_u(x)\ge 0,\ x\in\R^3$;
\item [{\rm(2)}] there exists $C>0$ independent of $u$ such that
$$
\int_{\R^3}\phi_uu^2\le C\|u\|^4;
$$
\item [{\rm(3)}] if $u$ is a radial function, then so is $\phi_u$;
\item [{\rm(4)}] if $u_n\rg u$ strongly in $L^{\frac{12}{5}}(\R^3)$,
then $\phi_{u_n}\rg \phi_u$ strongly in $\mathcal{D}^{1,2}(\R^3)$.
\end{itemize}
\end{lemma}

Define the Sobolev space
$$
E=\left\{u\in\mathcal{D}^{1,2}(\R^3): \int_{\R^3}V(x)u^2<\iy\right\}
$$
with the norm
$$
\|u\|_E=\left(\int_{\R^3}\big(|\na u|^2+V(x)u^2\big)\right)^\frac12.
$$
This is a Hilbert space and its inner product is denoted by
$(\cdot,\cdot)_E$.

\begin{remark}\lab{r1} {\rm By $(V_0)$,
%we know that $E\hookrightarrow H^1(\R^3)$ and
the embedding $E\hookrightarrow L^q(\R^3)\ (2\le q<6)$ is compact.
This fact implies the (PS) condition; see, e.g., \cite{BW1}. As in
\cite{BPW}, $(V_0)$ can be replaced with the weaker condition:
\begin{itemize}
\item [$(V_0)'$] There exists $r>0$ such that for any $b>0$,
$$\lim\limits_{|y|\rg\iy}m(\{x\in \R^3:V(x)\le b\}\cap B_r(y))=0,$$
where $B_r(y)=\{x\in\R^3:\ |x-y|<r\}$ and $m$ is the Lebesgue
measure in $\R^3$.
\end{itemize}
} \end{remark}

Let us define
$$
D(f,g)=\int_{\R^3}\int_{\R^3}\frac{f(x)g(y)}{4\pi|x-y|}dxdy.
$$
In particular, for $u\in H^1(\R^3)$,
$D(u^2,u^2)=\int_{\R^3}\phi_uu^2$. Moreover, we have the following
properties. For a proof, we refer to \cite[p.250]{Lieb} and
\cite{Ruiz2}.

\begin{lemma}\lab{l1} \noindent
\begin{itemize}
\item [{\rm(1)}] $D(f,g)^2\le D(f,f)D(g,g)$ for any $f,g\in L^{\frac{6}{5}}(\R^3)$;
\item [{\rm(2)}] $D(uv,uv)^2\le D(u^2,u^2)D(v^2,v^2)$
for any $u,v\in L^{\frac{12}{5}}(\R^3)$.
\end{itemize}
\end{lemma}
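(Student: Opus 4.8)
The plan is to recognize $D(\cdot,\cdot)$ as a positive semi-definite symmetric bilinear form on $L^{6/5}(\R^3)$ and then obtain both inequalities from the Cauchy--Schwarz inequality. First I would record that $D$ is finite on $L^{6/5}(\R^3)\times L^{6/5}(\R^3)$: the Hardy--Littlewood--Sobolev inequality gives a constant $C>0$ with $|D(f,g)|\le C\|f\|_{6/5}\|g\|_{6/5}$, so $D$ is a bounded symmetric bilinear form there. The substantive point is positivity, $D(f,f)\ge 0$. I would check this first for $f\in C_c^\infty(\R^3)$, where $\phi_f:=\frac{1}{4\pi|\cdot|}*f\in\mathcal{D}^{1,2}(\R^3)$ solves $-\DD\phi_f=f$; integrating by parts yields $D(f,f)=\int_{\R^3}\phi_f f=\int_{\R^3}|\na\phi_f|^2\ge 0$ (equivalently, by Plancherel, $D(f,f)=c\int_{\R^3}|\widehat f(\xi)|^2|\xi|^{-2}\,d\xi$ with $c>0$). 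Since $C_c^\infty(\R^3)$ is dense in $L^{6/5}(\R^3)$ and $D$ is continuous, $D(f,f)\ge 0$ for all $f\in L^{6/5}(\R^3)$. Given this, (1) is the classical argument for a positive semi-definite symmetric bilinear form: for every $t\in\R$ one has $0\le D(f+tg,f+tg)=D(f,f)+2tD(f,g)+t^2D(g,g)$, and imposing that the discriminant of this quadratic in $t$ be nonpositive gives $D(f,g)^2\le D(f,f)D(g,g)$.

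For (2), note that if $u,v\in L^{12/5}(\R^3)$ then $u^2,v^2,uv\in L^{6/5}(\R^3)$ by H\"older's inequality, so $D(u^2,u^2)$, $D(v^2,v^2)$ and $D(uv,uv)$ are all finite by the bound above. The cleanest route is a direct application of Cauchy--Schwarz in $L^2$ of the product measure $d\mu=(4\pi|x-y|)^{-1}\,dx\,dy$ on $\R^3\times\R^3$: writing $w_u(x,y)=u(x)u(y)$ and $w_v(x,y)=v(x)v(y)$, one has $D(uv,uv)=\int\!\int w_u w_v\,d\mu$, while $\|w_u\|_{L^2(\mu)}^2=D(u^2,u^2)$ and $\|w_v\|_{L^2(\mu)}^2=D(v^2,v^2)$; hence $|D(uv,uv)|\le D(u^2,u^2)^{1/2}D(v^2,v^2)^{1/2}$, which is exactly (2). (In particular (2) is independent of (1): it is just Cauchy--Schwarz in $L^2(\mu)$ applied to $w_u$ and $w_v$.)

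The only genuinely nontrivial ingredient is the positivity $D(f,f)\ge 0$ on all of $L^{6/5}(\R^3)$ — the positive-definiteness of the Newtonian kernel $\frac{1}{4\pi|x|}$ — which requires the density/approximation (or Fourier) argument sketched above; everything else is H\"older, Hardy--Littlewood--Sobolev, and Cauchy--Schwarz. All of this is classical; see \cite[p.\ 250]{Lieb} and \cite{Ruiz2}.
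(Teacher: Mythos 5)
Your proof is correct. Note that the paper itself does not prove this lemma at all: it simply refers the reader to \cite[p.~250]{Lieb} and to \cite{Ruiz2}, so there is no internal argument to compare against. What you have written is precisely the standard argument behind those citations: part (1) is the Cauchy--Schwarz inequality for the symmetric bilinear form $D$, whose only nontrivial ingredient is the positive definiteness of the Newtonian kernel (Hardy--Littlewood--Sobolev for finiteness and continuity, then positivity via $D(f,f)=\int_{\R^3}|\na\phi_f|^2$ or the Fourier representation on a dense class); part (2) is Cauchy--Schwarz in $L^2$ of the measure $(4\pi|x-y|)^{-1}dx\,dy$ applied to $u(x)u(y)$ and $v(x)v(y)$, which is exactly how this inequality is obtained in \cite{Ruiz2}. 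Two cosmetic remarks only: the measure $(4\pi|x-y|)^{-1}dx\,dy$ is a measure on the product space but not a product measure, so the phrasing should be adjusted; and in the discriminant argument for (1) you should note the degenerate case $D(g,g)=0$, where nonnegativity of the then-affine function of $t$ forces $D(f,g)=0$, so the inequality still holds. Neither affects correctness.
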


Substituting $\phi=\phi_u$ into system (\ref{q1}), we can rewrite
system (\ref{q1}) as the single equation
\begin{equation}\lab{q2}
-\DD u+V(x)u+\phi_uu=f(u),\ \ u\in E.
\end{equation}
We define the energy functional $I$ on $E$ by
$$
I(u)=\frac{1}{2}\int_{\R^3}\big(|\na
u|^2+V(x)u^2\big)+\frac{1}{4}\int_{\R^3}\phi_uu^2-\int_{\R^3}F(u).
$$
It is standard to show that $I\in C^1(E,\R)$ and
$$
\langle I'(u),v\rangle=\int_{\R^3}\big(\na u\cdot\na
v+V(x)uv+\phi_uuv-f(u)v\big),\quad u,\ v\in E.
$$
It is easy to verify that $(u,\phi_u)\in
E\times\mathcal{D}^{1,2}(\R^3)$ is a solution of (\ref{q1}) if and
only if $u\in E$ is a critical point of $I$. \vskip0.1in

%%%%%%%%%%%%%%%%%%%%%%%%%%%%%%%%%%%%%%%%%%%%%%%%%%%%%%%%%%%%%%%%%%%%%%%%%%%%%%%%%
%%%%%%%%%%%%%%%%%%%%%%%%%%%%%%%%%%%%%%%%%%%%%%%%%%%%%%%%%%%%%%%%%%%%%%%%%%%%%%%%%
%%%%%%%%%%%%%%%%%%%%%%%%%%%%%%%%%%%%%%%%%%%%%%%%%%%%%%%%%%%%%%%%%%%%%%%%%%%%%%%%%
%%%%%%%%%%%%%%%%%%%%%%%%%%%%%%%%%%%%%%%%%%%%%%%%%%%%%%%%%%%%%%%%%%%%%%%%%%%%%%%%%

\s{Proof of Theorem \ref{Th1}}
\renewcommand{\theequation}{3.\arabic{equation}}

In this section, we prove the existence of sign-changing solutions
to system (\ref{q1}) in the case $\mu>4$, working with (\ref{q2}).

\subsection{Properties of operator $A$}

We introduce an auxiliary operator $A$, which will be used to
construct the descending flow for the functional $I$. Precisely, the
operator $A$ is defined as follows: for any $u\in E$, $v=A(u)\in E$
is the unique solution to the equation
\begin{equation}\lab{a1}
-\DD v+V(x)v+\phi_uv=f(u),\ \ \ v\in E.
\end{equation}
Clearly, the three statements are equivalent: $u$ is a solution of
\eqref{q2}, $u$ is a critical point of $I$, and $u$ is a fixed point
of $A$.

\begin{lemma}\lab{l3.1}
The operator $A$ is well defined and is continuous and compact.
\end{lemma}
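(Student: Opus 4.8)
The plan is to establish the three asserted properties of $A$ in turn: well-definedness, continuity, and compactness. For well-definedness, fix $u\in E$ and consider the bilinear form $a_u(v,w)=\int_{\R^3}(\na v\cdot\na w+V(x)vw+\phi_u vw)$ on $E\times E$. Since $\phi_u\ge 0$ by Lemma \ref{l0}(1), this form is coercive and bounded on $E$ (the term $\int\phi_u vw$ is controlled via Hölder and Lemma \ref{l0}(2) together with the embedding $E\hookrightarrow L^q$), so it defines an equivalent inner product on $E$. Meanwhile, by $(f_1)$--$(f_2)$ one has the growth bound $|f(u)|\le C(|u|+|u|^{p-1})$, hence $f(u)\in L^{q'}(\R^3)$ for the exponent dual to some $q\in(2,6)$, which makes $w\mapsto\int_{\R^3}f(u)w$ a bounded linear functional on $E$ by the Sobolev embedding. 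The Lax--Milgram theorem then yields a unique $v=A(u)\in E$ solving \eqref{a1}.

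For continuity, suppose $u_n\to u$ in $E$. Write $v_n=A(u_n)$, $v=A(u)$. First I would show $\{v_n\}$ is bounded: testing \eqref{a1} with $v_n$ and using $\phi_{u_n}\ge 0$ gives $\|v_n\|_E^2\le\int_{\R^3}f(u_n)v_n\le \|f(u_n)\|_{q'}\|v_n\|_q\le C\|v_n\|_E$, where $\|f(u_n)\|_{q'}$ stays bounded because $u_n\to u$ in all $L^q$, $2\le q<6$ (compact embedding, Remark \ref{r1}). Subtracting the equations for $v_n$ and $v$ and testing with $v_n-v$:
$$
\|v_n-v\|_E^2+\int_{\R^3}\phi_{u_n}(v_n-v)^2=\int_{\R^3}\big(f(u_n)-f(u)\big)(v_n-v)+\int_{\R^3}(\phi_u-\phi_{u_n})v(v_n-v).
$$
The first term on the right tends to $0$ since $f(u_n)\to f(u)$ in $L^{q'}$ (continuity of the Nemytskii operator, using $(f_1)$--$(f_2)$ and $u_n\to u$ in $L^2\cap L^p$) and $\{v_n-v\}$ is bounded in $L^q$. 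For the second term, $u_n\to u$ in $L^{12/5}$ forces $\phi_{u_n}\to\phi_u$ in $\mathcal D^{1,2}$ by Lemma \ref{l0}(4), hence (after Hölder with exponents $6,6,3/2$ or via Lemma \ref{l1}) $\int(\phi_u-\phi_{u_n})v(v_n-v)\to 0$; the left-hand term $\int\phi_{u_n}(v_n-v)^2\ge 0$ only helps. Thus $\|v_n-v\|_E\to 0$.

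For compactness, let $\{u_n\}$ be bounded in $E$ and set $v_n=A(u_n)$. The same test-with-$v_n$ computation as above shows $\{v_n\}$ is bounded in $E$, so up to a subsequence $v_n\rhu v$ in $E$ and (compact embedding) $v_n\to v$ strongly in $L^q$ for $2\le q<6$; likewise $u_n\to u$ strongly in every such $L^q$. Testing the equation for $v_n$ against $v_n-v$ and using that $\int_{\R^3}(\na v_n\cdot\na(v_n-v)+V(x)v_n(v_n-v))=\|v_n-v\|_E^2+o(1)$ (since $v_n\rhu v$), I get
$$
\|v_n-v\|_E^2+o(1)+\int_{\R^3}\phi_{u_n}v_n(v_n-v)=\int_{\R^3}f(u_n)(v_n-v)\to 0,
$$
the right side vanishing because $f(u_n)$ is bounded in $L^{q'}$ and $v_n-v\to 0$ in $L^q$. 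For the nonlocal term, $\|\phi_{u_n}\|_{L^6}\le C\|\phi_{u_n}\|_{\mathcal D^{1,2}}\le C\|u_n\|^2$ is bounded, $\{v_n\}$ is bounded in $L^6$, and $v_n-v\to 0$ in $L^3$, so $\int\phi_{u_n}v_n(v_n-v)\to 0$ by Hölder. Hence $\|v_n-v\|_E\to 0$, proving that $A$ maps bounded sets into relatively compact sets.

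I expect the main obstacle to be the handling of the nonlocal term $\int_{\R^3}\phi_u vw$ throughout—both in verifying it does not destroy coercivity of $a_u$ (resolved by $\phi_u\ge 0$) and in controlling its variation as $u_n\to u$, which is exactly where Lemma \ref{l0}(4) and the integrability $L^{12/5}$ enter; the rest is a routine Lax--Milgram plus compact-embedding argument.
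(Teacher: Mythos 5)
Your argument is correct and follows essentially the same route as the paper: a standard existence argument for the coercive linear problem (you use Lax--Milgram where the paper minimizes the strictly convex functional $J_0$), then subtraction of the two equations tested with $v_n-v$, with the nonlocal term handled through $\phi_{u_n}\to\phi_u$ in $\mathcal{D}^{1,2}\hookrightarrow L^6$ (Lemma \ref{l0}(4)) or the $D$-form estimates of Lemma \ref{l1}, and the nonlinearity through its growth and the compact embedding of Remark \ref{r1}; the fact that you conclude compactness directly from strong convergence to the weak limit, without identifying that limit as $A(u)$ as the paper does, is immaterial. The only slips are in H\"older bookkeeping: $E$ does not embed into $L^{3/2}$, so the triple $(6,6,3/2)$ offered in the continuity step and the pairing of $v_n$ bounded in $L^6$ with $v_n-v\to0$ in $L^3$ in the compactness step (where $\tfrac16+\tfrac16+\tfrac13\neq1$) must be replaced by, e.g., $\|\phi_{u_n}\|_6\|v_n\|_{12/5}\|v_n-v\|_{12/5}$ or $\|\phi_{u_n}\|_6\|v_n\|_3\|v_n-v\|_2$, all of which are available from the bounds and strong $L^q$-convergences ($2\le q<6$) you already have.
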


\begin{proof} Let $u\in E$ and define
$$
J_0(v)=\frac{1}{2}\int_{\R^3}\big(|\na
v|^2+(V(x)+\phi_u)v^2\big)-\int_{\R^3}f(u)v,\quad v\in E.
$$
Then $J_0\in C^1(E,\R)$. By $(f_1)$-$(f_2)$ and Remark \ref{r1},
$J_0$ is coercive, bounded below, weakly lower semicontinuous, and
strictly convex. Thus, $J_0$ admits a unique minimizer $v=A(u)\in
E$, which is the unique solution to \eqref{a1}. Moreover, $A$ maps
bounded sets into bounded sets.

In the following, we prove that $A$ is continuous. Let
$\{u_n\}\subset E$ with $u_n\rg u\in E$ strongly in $E$. Let
$v=A(u)$ and $v_n=A(u_n)$. We need to prove $\|v_n-v\|_E\to0$. We
have
\begin{align*}
\|v-v_n\|_E^2&=\int_{\R^3}(\phi_{u_n}v_n-\phi_uv)(v-v_n)
    +\int_{\R^3}(f(u)-f(u_n))(v-v_n)\\
&=I_1+I_2.
\end{align*}
By Lemma \ref{l0} and Lemma \ref{l1},
\begin{align*}
I_1\le&\int_{\R^3}(\phi_{u_n}v-\phi_uv)(v-v_n)\\
=&D(u_n^2-u^2,v(v-v_n))\\
\le& D(u_n^2-u^2,u_n^2-u^2)^{\frac{1}{2}}D(v(v-v_n),v(v-v_n))^{\frac{1}{2}}\\
\le& D((u_n-u)^2,(u_n-u)^2)^{\frac{1}{4}}D((u_n+u)^2,(u_n+u)^2)^{\frac{1}{4}}\\
&\times D(v^2,v^2)^{\frac{1}{4}}D((v-v_n)^2,(v-v_n)^2)^{\frac{1}{4}}\\
\le& C_1\|u_n-u\|\|u_n+u\|\|v\|\|v-v_n\|\\
\le& C_1\|u_n-u\|_E\|v-v_n\|_E.
\end{align*}
Now, we estimate the second term $I_2$. Let $\phi\in C_0^\iy(\R)$ be
such that $\phi(t)\in[0,1]$ for $t\in\R$, $\phi(t)=1$ for $|t|\le 1$
and $\phi(t)=0$ for $|t|\ge 2$. Setting
$$g_1(t)=\phi(t)f(t),\ \ g_2(t)=f(t)-g_1(t).$$ By $(f_1)$-$(f_2)$, there
exists $C_2>0$ such that $|g_1(s)|\le C_2|s|$ and $|g_2(s)|\le
C_2|s|^5$ for $s\in \R$. Then,
\begin{align*}
I_2=&\int_{\R^3}(g_1(u)-g_1(u_n))(v-v_n)+\int_{\R^3}(g_2(u)-g_2(u_n))(v-v_n)\\
\le&\left(\int_{\R^3}|g_1(u_n)-g_1(u)|^2\right)^{\frac{1}{2}}
\left(\int_{\R^3}|v-v_n|^2\right)^{\frac{1}{2}}\\
&+\left(\int_{\R^3}|g_2(u_n)-g_2(u)|^{\frac{6}{5}}\right)^{\frac{5}{6}}
\left(\int_{\R^3}|v-v_n|^6\right)^{\frac{1}{6}}\\
\le&
C_3\|v-v_n\|_E\left[\left(\int_{\R^3}|g_1(u_n)-g_1(u)|^2\right)^{\frac{1}{2}}
+\left(\int_{\R^3}|g_2(u_n)-g_2(u)|^{\frac{6}{5}}\right)^{\frac{5}{6}}\right].
\end{align*}
Thus,
\begin{align*}
\|v-v_n\|_E\le& C_4\left[\|u-u_n\|_E
+\left(\int_{\R^3}|g_1(u_n)-g_1(u)|^2\right)^{\frac{1}{2}}\right.\\
&
\left.+\left(\int_{\R^3}|g_2(u_n)-g_2(u)|^{\frac{6}{5}}\right)^{\frac{5}{6}}\right].
\end{align*}
Therefore, by the dominated convergence theorem, $\|v-v_n\|_E\rg 0$
as $n\rg\iy$.

Finally, we show that $A$ is compact. Let $\{u_n\}\subset E$ be a
bounded sequence. Then $\{v_n\}\subset E$ is a bounded sequence,
where, as above, $v_n=A(u_n)$. Passing to a subsequence, by Remark
\ref{r1}, we may assume that $u_n\rg u$ and $v_n\rg v$ weakly in $E$
and strongly in $L^q(\R^3)$ as $n\rg\iy$ for $q\in[2,6)$. Consider
the identity
\begin{equation}\label{identity}
\int_{\mathbb R^3}\big(\nabla
v_n\cdot\nabla\xi+Vv_n\xi+\phi_{u_n}v_n\xi\big)=\int_{\mathbb
R^3}f(u_n)\xi,\quad \xi\in E.
\end{equation}
Since $u_n\to u$ strongly in $L^\frac{12}{5}(\mathbb R^3)$, it
follows from Lemma \ref{l0}(4) and the Sobolev imbedding theorem
that $\phi_{u_n}\to\phi_u$ strongly in $L^6(\mathbb R^3)$. Since, in
addition, $v_n\to v$ strongly in $L^\frac{12}{5}(\mathbb R^3)$,
using the H\"older inequality, we have
$$
\left|\int_{\mathbb R^3}(\phi_{u_n}v_n-\phi_uv)\xi\right|
\leq\|\phi_{u_n}\|_6\|v_n-v\|_\frac{12}{5}\|\xi\|_\frac{12}{5}
+\|\phi_{u_n}-\phi_u\|_6\|v\|_\frac{12}{5}\|\xi\|_\frac{12}{5}\to0
$$
for any $\xi\in E$. Taking limit as $n\to\infty$ in \eqref{identity}
yields
\begin{equation*}
\int_{\mathbb R^3}\big(\nabla
v\cdot\nabla\xi+Vv\xi+\phi_{u}v\xi\big)=\int_{\mathbb
R^3}f(u)\xi,\quad \xi\in E.
\end{equation*}
This means $v=A(u)$ and thus
\begin{align*}
\|v-v_n\|_E^2=\int_{\R^3}\big(\phi_uv(v_n-v)-\phi_{u_n}v_n(v_n-v)\big)
+\int_{\R^3}(f(u_n)-f(u))(v_n-v).
\end{align*}
Hence, in the same way as above, $\|v-v_n\|_E\rg 0$, i.e.,
$A(u_n)\rg A(u)$ in $E$ as $n\rg\iy$.
\end{proof}

\begin{remark} {\rm Obviously, if $f$ is odd then $A$ is odd.} \end{remark}

\begin{lemma}\lab{l3.2} \noindent
\begin{itemize}
\item [{\rm (1)}] $\langle I'(u),u-A(u)\rangle\ge \|u-A(u)\|_E^2$ for all $u\in E$;
\item [{\rm (2)}] $\|I'(u)\|\le \|u-A(u)\|_E(1+C\|u\|_E^2)$ for some $C>0$
and all $u\in E$.
\end{itemize}
\end{lemma}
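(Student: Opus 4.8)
The plan is to read off both estimates directly from the weak formulation of \eqref{a1}, testing that equation against cleverly chosen functions so that the nonlinear term $f(u)$ cancels, and then controlling the remaining nonlocal contribution with the Schwarz-type inequalities for the bilinear form $D$.

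For (1), I would set $v=A(u)$ and $w=u-v$. Since $v\in E$ is the unique weak solution of \eqref{a1}, testing that equation with $\xi=w$ gives $(v,w)_E+\int_{\R^3}\phi_uvw=\int_{\R^3}f(u)w$. On the other hand, the formula for $I'$ gives $\langle I'(u),w\rangle=(u,w)_E+\int_{\R^3}\phi_uuw-\int_{\R^3}f(u)w$. Subtracting, the $f(u)$-terms cancel and one is left with
$$\langle I'(u),u-A(u)\rangle=(w,w)_E+\int_{\R^3}\phi_uw^2=\|u-A(u)\|_E^2+\int_{\R^3}\phi_u\,(u-A(u))^2 .$$
Since $\phi_u\ge0$ by Lemma \ref{l0}(1), the last term is nonnegative and (1) follows (indeed with equality up to that extra nonnegative term).

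For (2), I would estimate $\langle I'(u),v\rangle$ for an arbitrary $v\in E$ with $\|v\|_E=1$. Testing \eqref{a1} with this $v$ gives $\int_{\R^3}f(u)v=(A(u),v)_E+\int_{\R^3}\phi_uA(u)v$, and substituting into the formula for $I'$ produces the clean identity
$$\langle I'(u),v\rangle=(u-A(u),v)_E+\int_{\R^3}\phi_u\,(u-A(u))\,v .$$
The first term is bounded by $\|u-A(u)\|_E$ by Cauchy--Schwarz. For the nonlocal term, I would use $\int_{\R^3}\phi_ugh=D(u^2,gh)$; then Lemma \ref{l1}(1) gives $|D(u^2,(u-A(u))v)|\le D(u^2,u^2)^{1/2}D((u-A(u))v,(u-A(u))v)^{1/2}$, and Lemma \ref{l1}(2) bounds the second factor by $D((u-A(u))^2,(u-A(u))^2)^{1/4}D(v^2,v^2)^{1/4}$. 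Finally Lemma \ref{l0}(2) bounds each $D(w^2,w^2)$ by $C\|w\|^4$, and since $\inf_{\R^3}V>0$ (by continuity and positivity of $V$ together with $(V_0)$) the $H^1$-norm is controlled by $\|\cdot\|_E$, so the nonlocal term is at most $C\|u\|_E^2\,\|u-A(u)\|_E$. Adding the two contributions and taking the supremum over $\|v\|_E=1$ yields $\|I'(u)\|\le\|u-A(u)\|_E(1+C\|u\|_E^2)$.

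I do not anticipate a genuine obstacle: the argument is essentially algebraic once the weak form of \eqref{a1} is written down. The only point requiring a little care is the bookkeeping of the nonlocal term — getting the exponents right when chaining Lemma \ref{l1}(1), Lemma \ref{l1}(2) and Lemma \ref{l0}(2) — and recording the harmless fact that $(V_0)$ forces $\inf V>0$, so that $E$ embeds continuously into $H^1(\R^3)$ and hence into every $L^q(\R^3)$ with $2\le q\le6$.
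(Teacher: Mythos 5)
Your proposal is correct and follows essentially the same route as the paper: both parts rest on the identity $\langle I'(u),\varphi\rangle=(u-A(u),\varphi)_E+\int_{\R^3}\phi_u(u-A(u))\varphi$ obtained from the weak form of \eqref{a1}, with (1) following from $\phi_u\ge0$ and (2) from bounding $D(u^2,(u-A(u))\varphi)$ via Lemma \ref{l1} and Lemma \ref{l0}(2). Your explicit chaining of the exponents and the remark that $(V_0)$ with $V>0$ gives the continuous embedding $E\hookrightarrow H^1(\R^3)$ merely spell out steps the paper leaves implicit.
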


\begin{proof}
Since $A(u)$ is the solution of equation (\ref{a1}), we see that
\begin{equation}\lab{a2} \langle
I'(u),u-A(u)\rangle=\|u-A(u)\|_E^2+\int_{\R^3}\phi_u(u-A(u))^2,
\end{equation}
which implies $\langle I'(u),u-A(u)\rangle\ge \|u-A(u)\|_E^2$ for
all $u\in E$. For any $\varphi\in E$, we have
\begin{align*}
\langle
I'(u),\varphi\rangle&=(u-A(u),\varphi)_E+\int_{\R^3}\phi_u(u-A(u))\varphi\\
&=(u-A(u),\varphi)_E+D(u^2,(u-A(u))\varphi).
\end{align*}
By Lemma \ref{l0} and Lemma \ref{l1}, $$|D(u^2,(u-A(u))\varphi)|\le
C\|u\|_E^2\|u-A(u)\|_E\|\varphi\|_E.$$ Thus, $\|I'(u)\|\le
\|u-A(u)\|_E(1+C\|u\|_E^2)$ for all $u\in E$. \end{proof}

\begin{lemma}\lab{l3.3}
For $a<b$ and $\alpha>0$, there exists $\beta>0$ such that
$\|u-A(u)\|_E\ge\beta$ if $u\in E$, $I(u)\in [a,b]$ and
$\|I'(u)\|\ge\alpha$.
\end{lemma}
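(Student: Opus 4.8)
The plan is to argue by contradiction and reduce to an a priori bound plus Lemma~\ref{l3.2}(2). Suppose the claim fails: there is $\{u_n\}\subset E$ with $I(u_n)\in[a,b]$, $\|I'(u_n)\|\ge\alpha$, yet $\delta_n:=\|u_n-A(u_n)\|_E\to0$. If I can show that $\{u_n\}$ is bounded in $E$, then Lemma~\ref{l3.2}(2) gives $\|I'(u_n)\|\le\delta_n\big(1+C\|u_n\|_E^2\big)\to0$, contradicting $\|I'(u_n)\|\ge\alpha$; so the whole matter reduces to proving boundedness of $\{u_n\}$.

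For boundedness I use the Ambrosetti--Rabinowitz condition $(f_3)$. Set $w_n:=u_n-A(u_n)$ and $P_n:=\int_{\R^3}\phi_{u_n}u_n^2\ge0$. Testing the identity $\langle I'(u),\varphi\rangle=(u-A(u),\varphi)_E+\int_{\R^3}\phi_u(u-A(u))\varphi$ from the proof of Lemma~\ref{l3.2} with $\varphi=u_n$ gives $\langle I'(u_n),u_n\rangle=(w_n,u_n)_E+\int_{\R^3}\phi_{u_n}w_nu_n$, while, since $\mu>4$, $(f_3)$ yields
\[
\mu I(u_n)-\langle I'(u_n),u_n\rangle\ \ge\ \Big(\tfrac{\mu}{2}-1\Big)\|u_n\|_E^2+\Big(\tfrac{\mu}{4}-1\Big)P_n,
\]
with both coefficients strictly positive. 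Now $|(w_n,u_n)_E|\le\delta_n\|u_n\|_E$, and for the nonlocal term I apply the Cauchy--Schwarz inequality with the nonnegative weight $\phi_{u_n}$ (Lemma~\ref{l0}(1)) and then Lemmas~\ref{l0}(2),~\ref{l1}(1):
\[
\Big|\int_{\R^3}\phi_{u_n}w_nu_n\Big|\ \le\ \Big(\int_{\R^3}\phi_{u_n}w_n^2\Big)^{1/2}\Big(\int_{\R^3}\phi_{u_n}u_n^2\Big)^{1/2}\ \le\ C\,\delta_n\,P_n^{3/4},
\]
using that $\int_{\R^3}\phi_{u_n}w_n^2\le C\,P_n^{1/2}\delta_n^2$. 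Plugging these bounds and $I(u_n)\le b$ into the previous display and absorbing $\delta_n\|u_n\|_E$ into $\|u_n\|_E^2$ and $C\delta_nP_n^{3/4}$ into $P_n$ by Young's inequality (the leftover terms being $O(\delta_n^2+\delta_n^4)$, hence bounded) forces $\|u_n\|_E$ and $P_n$ to be bounded. This finishes the proof.

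The only genuine obstacle is the nonlocal cross term $\int_{\R^3}\phi_{u_n}(u_n-A(u_n))u_n$. A crude estimate through Lemmas~\ref{l0}--\ref{l1} bounds it only by $C\|u_n\|_E^3\delta_n$, a cubic quantity that the quadratic coercivity coming from $(f_3)$ cannot absorb, however small $\delta_n$ is. The remedy is to split off one factor $P_n^{1/2}=\big(\int_{\R^3}\phi_{u_n}u_n^2\big)^{1/2}$ via the weighted Cauchy--Schwarz inequality, leaving the subcritical power $P_n^{3/4}$, which is controlled by the term $\big(\tfrac{\mu}{4}-1\big)P_n$ supplied by the Ambrosetti--Rabinowitz condition; this is precisely where $\mu>4$ (rather than merely $\mu>3$) is needed.
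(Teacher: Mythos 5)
Your proof is correct and takes essentially the same route as the paper: both exploit the Ambrosetti--Rabinowitz condition with $\mu>4$ together with the identity $\langle I'(u),u\rangle=(u-A(u),u)_E+\int_{\R^3}\phi_u\,(u-A(u))\,u$, estimate the nonlocal cross term by the weighted Cauchy--Schwarz inequality plus Lemmas \ref{l0} and \ref{l1}, absorb it via Young's inequality to get boundedness of $\{u_n\}$, and then conclude through Lemma \ref{l3.2}(2). The only cosmetic difference is in splitting the cross term: the paper bounds $\int_{\R^3}\phi_u(u-A(u))^2$ by $C\|u\|_E^2\|u-A(u)\|_E^2$ and absorbs the leftover into $\|u\|_E^2$ (using that $\|u_n-A(u_n)\|_E$ is small), whereas you bound it by $C\big(\int_{\R^3}\phi_uu^2\big)^{1/2}\|u-A(u)\|_E^2$ and absorb into $\int_{\R^3}\phi_uu^2$, leaving only an $O(\delta_n^2+\delta_n^4)$ remainder.
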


\begin{proof} For $u\in E$, by $(f_3)$, we have
\begin{align*}
I(u)&-\frac{1}{\mu}(u,u-A(u))_E\\
=&\left(\frac{1}{2}-\frac{1}{\mu}\right)\|u\|_E^2
  +\left(\frac{1}{4}-\frac{1}{\mu}\right)\int_{\R^3}\phi_uu^2\\
&+\frac{1}{\mu}\int_{\R^3}\phi_uu(u-A(u))
+\int_{\R^3}\big(\frac{1}{\mu}f(u)u-F(u)\big)\\
\ge&\left(\frac{1}{2}-\frac{1}{\mu}\right)\|u\|_E^2
  +\left(\frac{1}{4}-\frac{1}{\mu}\right)\int_{\R^3}\phi_uu^2
  +\frac{1}{\mu}\int_{\R^3}\phi_uu(u-A(u)).
\end{align*}
Then,
\begin{equation}
\lab{a2} \|u\|_E^2+\int_{\R^3}\phi_uu^2\le
C_1\left(|I(u)|+\|u\|_E\|u-A(u)\|_E+\left|\int_{\R^3}\phi_uu(u-A(u))\right|\right).
\end{equation}
By H\"{o}lder's inequality and Lemmas \ref{l0} and \ref{l1},
\begin{align*}
\left|\int_{\R^3}\phi_uu(u-A(u))\right|&\le
\left(\int_{\R^3}\phi_u(u-A(u))^2\right)^{\frac{1}{2}}
\left(\int_{\R^3}\phi_uu^2\right)^{\frac{1}{2}}\\
&\le
C_2\|u\|_E\|u-A(u)\|_E\left(\int_{\R^3}\phi_uu^2\right)^{\frac{1}{2}}.
\end{align*}
Thus, it follows from (\ref{a2}) that
\begin{equation}\lab{a3}
\|u\|_E^2\le
C_3\left(|I(u)|+\|u\|_E\|u-A(u)\|_E+\|u\|_E^2\|u-A(u)\|_E^2\right).
\end{equation}
If there exists $\{u_n\}\subset E$ with $I(u_n)\in [a,b]$ and
$\|I'(u_n)\|\ge\alpha$ such that $\|u_n-A(u_n)\|_E\rg 0$ as
$n\rg\iy$, then it follows from (\ref{a3}) that $\{\|u_n\|_E\}$ is
bounded, and by Lemma \ref{l3.2} we see that $\|I'(u_n)\|\rg 0$ as
$n\rg\iy$, which is a contradiction. Thus, the proof is completed.
\end{proof}

\subsection{Invariant subsets of descending flow}
To obtain sign-changing solutions, we make use of the positive and
negative cones as in many references such as \cite{Liu2, Liu4, BW2,
Wang}. Precisely, define
$$
P^+:=\{u\in E:u\ge 0\}\ \ \mbox{and}\ \ P^-:=\{u\in E:u\le 0\}.
$$
Set for $\e>0$,
$$
P_\e^+:=\{u\in E: \mbox{dist}(u,P^+)<\e\}\ \ \mbox{and}\ \ P_\e^-:=\{u\in E:
\mbox{dist}(u,P^-)<\e\},
$$
where $\mbox{dist}(u,P^\pm)=\inf\limits_{v\in P^\pm}\|u-v\|_E$.
Obviously, $P_\e^-=-P_\e^+$. Let $W=P_\e^+\cup P_\e^-$. Then $W$ is
an open and symmetric subset of $E$ and $E\setminus W$ contains only
sign-changing functions. On the other hand, the next lemma shows
that, for $\e$ small, all sign-changing solutions to \eqref{q2} are
contained in $E\setminus W$.

\begin{lemma}\lab{l4.1}
There exists $\e_0>0$ such that for $\e\in(0,\e_0)$,
\begin{itemize}
\item [{\rm(1)}] $A(\pa P_\e^-)\subset P_\e^-$ and every nontrivial solution $u\in
P_\e^-$ is negative,
\item [{\rm(2)}] $A(\pa P_\e^+)\subset P_\e^+$ and every nontrivial solution $u\in
P_\e^+$ is positive.
\end{itemize}
\end{lemma}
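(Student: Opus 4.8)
The plan is to prove first the contraction property
$$\operatorname{dist}(A(u),P^-)\le\tfrac12\operatorname{dist}(u,P^-),\qquad \operatorname{dist}(A(u),P^+)\le\tfrac12\operatorname{dist}(u,P^+)$$
for all $u\in E$ whose corresponding distance is smaller than a suitable $\e_0>0$, and then to read off both assertions of the lemma from it. Indeed, if $\e<\e_0$ and $u\in\pa P_\e^-$, then $\operatorname{dist}(u,P^-)=\e$ (because $\operatorname{dist}(\cdot,P^-)$ is continuous and $P_\e^-$ is open), hence $\operatorname{dist}(A(u),P^-)\le\e/2<\e$, i.e.\ $A(u)\in P_\e^-$; and if $u\in P_\e^-$ is a nontrivial solution of \eqref{q2}, then $u=A(u)$, and the contraction forces $\operatorname{dist}(u,P^-)=0$, i.e.\ $u\le0$. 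The statements involving $P^+$ will be entirely symmetric.

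To establish the contraction for $P^-$, I would set $v=A(u)$, so that $-\DD v+Vv+\phi_uv=f(u)$, and test this identity against $v^+\in E$. Using $\na v\cdot\na v^+=|\na v^+|^2$, $vv^+=(v^+)^2$, and $\phi_u\ge0$ from Lemma \ref{l0}(1), one gets
$$\|v^+\|_E^2\le\|v^+\|_E^2+\int_{\R^3}\phi_u(v^+)^2=\int_{\R^3}f(u)v^+.$$
The sign condition contained in $(f_3)$ (namely $sf(s)>0$ for $s\ne0$, so $f(s)\ge0$ for $s\ge0$, $f(s)\le0$ for $s\le0$, and $f(0)=0$) gives $f(u)v^+\le f(u^+)v^+$ pointwise, while $(f_1)$ and $(f_2)$ yield $0\le f(s)\le\delta s+C_\delta s^{p-1}$ for $s\ge0$ and any $\delta>0$. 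Hence, by H\"older's inequality and the embedding $E\hookrightarrow L^q(\R^3)$, $q\in[2,6)$, from Remark \ref{r1},
$$\|v^+\|_E^2\le C\big(\delta\|u^+\|_2+C_\delta\|u^+\|_p^{p-1}\big)\|v^+\|_E.$$
Since $0\le u^+\le(u-w)^+\le|u-w|$ pointwise for every $w\in P^-$, one has $\|u^+\|_q\le C_q\operatorname{dist}(u,P^-)$; writing $d=\operatorname{dist}(u,P^-)$ this leads to $\|v^+\|_E\le C_1\delta d+C_2C_\delta d^{p-1}$. Choosing $\delta$ with $C_1\delta\le1/4$ and then $\e_0$ with $C_2C_\delta\e_0^{p-2}\le1/4$ gives $\operatorname{dist}(A(u),P^-)\le\|v^+\|_E\le d/2$ whenever $d<\e_0$. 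The estimate for $P^+$ is obtained the same way by testing with $v^-$ and noting that $-f(u)v^-\le|f(u^-)|v^-$ pointwise.

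It remains to upgrade $u\le0$ to $u<0$ for a nontrivial solution $u\in P^-$: putting $w=-u\ge0$, one has $-\DD w+(V+\phi_u)w=-f(-w)\ge0$ (again by the sign condition), so $w$ is a nonnegative, nontrivial weak supersolution of $-\DD w+(V+\phi_u)w=0$ whose zeroth order coefficient $V+\phi_u$ is nonnegative and lies in $L^{3/2}_{\loc}(\R^3)$ (since $V$ is continuous and $\phi_u\in L^6(\R^3)$), and the strong maximum principle gives $w>0$ in $\R^3$; the case $u\in P^+$ is analogous. I expect the only delicate point to be the bookkeeping of the sign of $f$ near the cones, ensuring that the regions where $f(u)v^\pm$ would carry the wrong sign contribute non-positively so that the whole right-hand side is controlled by $\|u^\pm\|_q$. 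The nonlocal term is in fact harmless here, precisely because $\phi_u\ge0$ makes $\int_{\R^3}\phi_u(v^\pm)^2$ land on the favorable side of the inequality.
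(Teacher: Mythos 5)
Your proposal is correct and follows essentially the same route as the paper's proof: test the equation defining $v=A(u)$ against $v^+$, discard the nonlocal term using $\phi_u\ge 0$, use the sign condition from $(f_3)$ together with the growth estimate $|f(s)|\le\delta|s|+C_\delta|s|^{p-1}$ and the bound $\|u^\pm\|_q\le C\,\mbox{dist}(u,P^\mp)$ to obtain the contraction $\mbox{dist}(A(u),P^-)\le\tfrac12\mbox{dist}(u,P^-)$ for small distances, and then conclude invariance of $\pa P_\e^\mp$ and strict negativity/positivity of nontrivial fixed points via the strong maximum principle. The only differences are cosmetic (you make explicit the boundary identity $\mbox{dist}(u,P^-)=\e$ on $\pa P_\e^-$ and the supersolution form of the maximum-principle step, which the paper leaves implicit).
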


\begin{proof}
Since the two conclusions are similar, we only prove the first one.
By $(f_1)$-$(f_2)$, for any fixed $\delta>0$, there exists
$C_\delta>0$ such that
$$
|f(t)|\le \delta|t|+C_\delta|t|^p,\ \ t\in\R.
$$
Let $u\in E$ and $v=A(u)$. By Remark \ref{r1}, for any $q\in[2,6]$,
there exists $m_q>0$ such that
\begin{equation}\label{123}
\|u^\pm\|_q=\inf\limits_{w\in P^\mp}\|u-w\|_q\le
m_q\inf\limits_{w\in P^\mp}\|u-w\|_E=m_q\mbox{dist}(u,P^\mp).
\end{equation}
Obviously, $\mbox{dist}(v,P^-)\le\|v^+\|_E$. Then, by $(f_3)$, we
estimate
\begin{align*}
\mbox{dist}(v,P^-)\|v^+\|_E&\le\|v^+\|_E^2=(v,v^+)_E\\
&=\int_{\R^3}\big(f(u)v^+-\phi_uvv^+\big)\\
&\le\int_{\R^3}f(u)v^+\le\int_{\R^3}f(u^+)v^+\\
&\le\int_{\R^3}(\delta|u^+|+C_\delta|u^+|^{p-1})|v^+|\\
&\le \delta\|u^+\|_2\|v^+\|_2+C_\delta\|u^+\|_p^{p-1}\|v^+\|_{p}\\
&\le
C\left(\delta\mbox{dist}(u,P^-)+C_\delta\mbox{dist}(u,P^-)^{p-1}\right)\|v^+\|_E.
\end{align*}
It follows that
$$
\mbox{dist}(A(u),P^-)\le
C\left(\delta\mbox{dist}(u,P^-)+C_\delta\mbox{dist}(u,P^-)^{p-1}\right).
$$
Thus, choosing $\delta$ small enough, there exists $\e_0>0$ such
that for $\e\in(0,\e_0)$,
$$\mbox{dist}(A(u),P^-)\le \frac{1}{2}\mbox{dist}(u,P^-)\ \ \mbox{for
any}\ \  u\in P_\e^-.$$ This implies that $A(\pa P_\e^-)\subset
P_\e^-$. If there exists $u\in P_\e^-$ such that $A(u)=u$, then
$u\in P^-$. If $u\not\equiv 0$, by the maximum principle, $u<0$ in
$\R^3$.
\end{proof}

Denote the set of fixed points of $A$ by $K$, which is exactly the
set of critical points of $I$. Since $A$ is merely continuous, $A$
itself is not applicable to construct a descending flow for $I$, and
we have to construct a locally Lipschitz continuous operator $B$ on
$E_0:=E\setminus K$ which inherits the main properties of $A$.

\begin{lemma}\lab{l4.2}
There exists a locally Lipschitz continuous operator $B:E_0\to E$
such that
\begin{itemize}
\item [{\rm(1)}] $B(\pa P_\e^+)\subset P_\e^+$ and $B(\pa P_\e^-)\subset P_\e^-$
for $\e\in(0,\e_0)$;
\item [{\rm(2)}] $\frac{1}{2}\|u-B(u)\|_E\le\|u-A(u)\|_E\le2\|u-B(u)\|_E$ for
all $u\in E_0$;
\item [{\rm(3)}] $\langle I'(u),u-B(u)\rangle\ge \frac{1}{2}\|u-A(u)\|_E^2$
for all $u\in E_0$;
\item [{\rm(4)}] if $f$ is odd then $B$ is odd.
\end{itemize}
\end{lemma}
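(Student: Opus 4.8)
The plan is to construct $B$ by the standard device of covering $E_0$ by small balls on which $A$ is nearly constant, and then gluing approximate values via a locally Lipschitz partition of unity, following the scheme of \cite{Liu4}. Concretely, for each $u\in E_0$ one has $\|u-A(u)\|_E=:\rho(u)>0$. Using continuity of $A$ (Lemma \ref{l3.1}), choose an open ball $N_u=B_{r_u}(u)\subset E_0$, with $r_u<\rho(u)/8$, so small that $\|A(w)-A(u)\|_E<\rho(u)/8$ for all $w\in N_u$. The family $\{N_u\}_{u\in E_0}$ is an open cover of the metric space $E_0$; since metric spaces are paracompact, there is a locally finite refinement $\{M_j\}_{j\in\Lambda}$ with $M_j\subset N_{u_j}$ for some $u_j\in E_0$, together with a locally Lipschitz partition of unity $\{\lambda_j\}$ subordinate to $\{M_j\}$ (the functions $\lambda_j(u)=\mathrm{dist}(u,E_0\setminus M_j)/\sum_k\mathrm{dist}(u,E_0\setminus M_k)$ are Lipschitz on a neighborhood of each point). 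Then set
\begin{equation*}
B(u)=\sum_{j\in\Lambda}\lambda_j(u)\,A(u_j),\qquad u\in E_0.
\end{equation*}
This is a locally finite sum of locally Lipschitz functions, hence $B$ is locally Lipschitz on $E_0$.

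Next I would verify the four properties. For (2) and (3): if $\lambda_j(u)>0$ then $u\in M_j\subset N_{u_j}$, so $\|u-u_j\|_E<r_{u_j}$ and $\|A(u_j)-A(u)\|_E<\rho(u_j)/8$; one also checks that $\rho(u_j)$ and $\rho(u)$ are comparable (e.g.\ $\rho(u_j)\le \rho(u)+\|u-u_j\|_E+\|A(u)-A(u_j)\|_E$ etc.), so that $\|A(u_j)-A(u)\|_E\le \tfrac14\|u-A(u)\|_E$ for all such $j$; summing against $\sum_j\lambda_j(u)=1$ gives $\|B(u)-A(u)\|_E\le\tfrac14\|u-A(u)\|_E$. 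From this, (2) follows by the triangle inequality, and for (3),
\begin{align*}
\langle I'(u),u-B(u)\rangle
&=\langle I'(u),u-A(u)\rangle+\langle I'(u),A(u)-B(u)\rangle\\
&\ge\|u-A(u)\|_E^2-\|I'(u)\|\,\|A(u)-B(u)\|_E
\end{align*}
and then Lemma \ref{l3.2}(1)--(2) together with $\|A(u)-B(u)\|_E\le\tfrac14\|u-A(u)\|_E$ are \emph{not quite enough} as written, because of the factor $(1+C\|u\|_E^2)$ in Lemma \ref{l3.2}(2); this is the point that needs care. The fix is to shrink the balls $N_u$ further so that $\|A(w)-A(u)\|_E$ is small relative to $\|u-A(u)\|_E/(1+C\|u\|_E^2)$ on $N_u$ as well, i.e.\ require both $\|A(w)-A(u)\|_E<\tfrac18\rho(u)$ and $(1+C\|w\|_E^2)\|A(w)-A(u)\|_E<\tfrac18\rho(u)$ for $w\in N_u$, which is possible by continuity of $A$ and of $w\mapsto\|w\|_E$. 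Then $\|B(u)-A(u)\|_E\le \tfrac14\rho(u)$ and $\|I'(u)\|\,\|A(u)-B(u)\|_E\le\tfrac12\rho(u)^2=\tfrac12\|u-A(u)\|_E^2$, giving (3).

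For (1): fix $\e\in(0,\e_0)$ and let $u\in\partial P_\e^+$. Whenever $\lambda_j(u)>0$ we have $u\in N_{u_j}$; choosing the radii $r_u$ also small enough that $N_u$ stays within $\e/2$ of $u$ (in particular $u_j\in\partial P_{\e'}^+$ for some $\e'\in(\e/2,3\e/2)\subset(0,\e_0)$ up to the obvious adjustment, or more simply arguing directly from $\mathrm{dist}(A(u_j),P^+)\le\tfrac12\mathrm{dist}(u_j,P^+)$ as in the proof of Lemma \ref{l4.1}), one gets $\mathrm{dist}(A(u_j),P^+)<\e$, i.e.\ $A(u_j)\in P_\e^+$. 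Since $P_\e^+$ is convex and $B(u)$ is a convex combination of the points $A(u_j)$, it follows that $B(u)\in P_\e^+$, so $B(\partial P_\e^+)\subset P_\e^+$; the same argument with $P_\e^-$ gives the other inclusion. Here one should double-check the quantitative bookkeeping: $\mathrm{dist}(A(u_j),P^+)\le\mathrm{dist}(A(u),P^+)+\|A(u_j)-A(u)\|_E$, and $\mathrm{dist}(A(u),P^+)\le\mathrm{dist}(u,P^+)/2\cdot$(constant)$<\e$ for $u\in\overline{P_\e^+}$ with $\e<\e_0$ by Lemma \ref{l4.1}, so shrinking $r_{u}$ to make $\|A(u_j)-A(u)\|_E$ small closes the gap — this is exactly why $A(\partial P_\e^+)\subset P_\e^+$ (rather than merely $A(\overline{P_\e^+})\subset\overline{P_\e^+}$) in Lemma \ref{l4.1} is the convenient formulation. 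Finally, for (4): if $f$ is odd then $A$ is odd, and one may choose the cover and refinement symmetrically (replace $\{M_j\}$ by $\{M_j\}\cup\{-M_j\}$ and symmetrize the partition of unity by $\tilde\lambda_j(u)=\tfrac12(\lambda_j(u)+\lambda_j(-u))$ with a correspondingly symmetric indexing), which forces $B(-u)=-B(u)$.

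The main obstacle is property (3): the naive estimate loses the factor $1+C\|u\|_E^2$ coming from the nonlocal term in Lemma \ref{l3.2}(2), so the construction must build in this weight from the start by imposing, on each coordinate ball, a smallness condition on $\|A(w)-A(u)\|_E$ that depends on $\|u\|_E$. Once that refinement is in place, properties (1), (2) and (4) are routine consequences of convexity of $P_\e^\pm$, the triangle inequality, and symmetrization, respectively.
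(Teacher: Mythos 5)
The paper itself omits the proof of this lemma, referring to \cite[Lemma 4.1]{Liu3} and \cite[Lemma 2.1]{Liu4}; your construction (balls on which $A$ is nearly constant, a locally Lipschitz partition of unity, $B(u)=\sum_j\lambda_j(u)A(u_j)$) is exactly that standard argument, and your treatment of (2) and of the weight $(1+C\|u\|_E^2)$ in (3) --- shrinking the balls so that $(1+C\|w\|_E^2)\|A(w)-A(u)\|_E$ is small relative to $\|u-A(u)\|_E$ --- is correct and is precisely the point that needs care there.

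However, your verification of (1) has a genuine gap as written. The lemma asserts a single $B$ with $B(\pa P_\e^+)\subset P_\e^+$ for \emph{every} $\e\in(0,\e_0)$, which is equivalent to: $\mbox{dist}(B(u),P^+)<\mbox{dist}(u,P^+)$ whenever $0<\mbox{dist}(u,P^+)<\e_0$. Your first suggestion (choose $r_u$ so that $N_u$ stays within $\e/2$ of $u$) ties the cover to one fixed $\e$, so the resulting $B$ depends on $\e$, contrary to the statement; your fallback (``shrink $r_u$ to make $\|A(u_j)-A(u)\|_E$ small'') cannot close the gap uniformly, because the required smallness is relative to $\e=\mbox{dist}(u,P^+)$, which has no positive lower bound on a fixed ball $N_{u_j}$ whose center lies in or near $P^+$. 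The standard repair is to build the cone geometry into the radii from the start: require in addition $r_u\le\tfrac12\,\mbox{dist}(u,P^+)$ whenever $\mbox{dist}(u,P^+)>0$ (and symmetrically for $P^-$), and note that the estimate in the proof of Lemma \ref{l4.1}, $\mbox{dist}(A(v),P^+)\le C\big(\delta\,\mbox{dist}(v,P^+)+C_\delta\,\mbox{dist}(v,P^+)^{p-1}\big)$, holds for all $v$, so in particular $A(P^+)\subset P^+$. Then for $u\in\pa P_\e^+$ and $\lambda_j(u)>0$ either $u_j\in P^+$, whence $A(u_j)\in P^+$, or $r_{u_j}\le\tfrac12\,\mbox{dist}(u_j,P^+)$ forces $r_{u_j}<\e$ and the contraction at $u_j$ gives $\mbox{dist}(A(u_j),P^+)\le\tfrac12(\e+r_{u_j})<\e$; convexity of $\mbox{dist}(\cdot,P^+)$ then yields $\mbox{dist}(B(u),P^+)<\e$, uniformly in $\e$.

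A second, smaller slip is in (4): averaging the weights, $\tilde\lambda_j(u)=\tfrac12\big(\lambda_j(u)+\lambda_j(-u)\big)$, produces \emph{even} functions, so $\sum_j\tilde\lambda_j(u)A(u_j)$ is even, not odd. The correct symmetrization is either to take a symmetric cover with paired indices, $u_{-j}=-u_j$ and $\lambda_{-j}(u)=\lambda_j(-u)$, so that $B(-u)=-B(u)$ follows from oddness of $A$, or to set $B(u)=\tfrac12\big(\tilde B(u)-\tilde B(-u)\big)$ and recheck (1)--(3), which survive because $\|u-A(u)\|_E$ is even in $u$, $P_\e^-=-P_\e^+$, and $P_\e^{\pm}$ are convex. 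With these two repairs your proof is complete and coincides with the construction the paper points to.
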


\begin{proof}
The proof is similar to the proofs of \cite[Lemma 4.1]{Liu3} and
\cite[Lemma 2.1]{Liu4}. We omit the details.
\end{proof}

\subsection{Existence of one sign-changing solution}

In this subsection, we will find one sign-changing solution of
(\ref{q2}) via mini-max method incorporated with invariant sets of
descending flow. First of all, we introduce the critical point
theorem \cite[Theorem 2.4]{Wang}. For more details, we refer to
\cite{Wang}.

Let $X$ be a Banach space, $J\in C^1(X,\R)$, $P,Q\subset X$ be open
sets, $M=P\cap Q$, $\Sg=\pa P\cap\pa Q$ and $W=P\cup Q$. For
$c\in\R$, $K_c=\{x\in X: J(x)=c, J'(x)=0\}$ and $J^c=\{x\in X:
J(x)\le c\}$. In \cite{Wang}, a critical point theory on metric
spaces was given, but here we only need a Banach space version of
the theory.

\begin{definition}
{\rm (\cite{Wang}) $\{P,Q\}$ is called an admissible family of
invariant sets with respect to $J$ at level $c$, provided that the
following deformation property holds: if $K_c\setminus W=\emptyset$,
then, there exists $\e_0>0$ such that for $\e\in(0,\e_0)$, there
exists $\eta\in C(X,X)$ satisfying
\begin{itemize}
\item [(1)] $\eta(\overline{P})\subset \overline{P}$, $\eta(\overline{Q})\subset
\overline{Q}$;
\item [(2)] $\eta\mid_{J^{c-2\e}}=id$;
\item [(3)] $\eta(J^{c+\e}\setminus W)\subset J^{c-\e}$.
\end{itemize}}
\end{definition}

\noindent{\bf Theorem A.} (\cite{Wang}) {\it Assume that $\{P,Q\}$
is an admissible family of invariant sets with respect to $J$ at any
level $c\ge c_{\ast}:=\inf_{u\in\Sigma}J(u)$ and there exists a map
$\vp_0:\Delta\rg X$ satisfying
\begin{itemize}
\item [{\rm(1)}] $\vp_0(\pa_1\Delta)\subset P$ and $\vp_0(\pa_2\Delta)\subset
      Q$,
\item [{\rm(2)}] $\vp_0(\pa_0\Delta)\cap M=\emptyset$,
\item [{\rm(3)}] $\sup\limits_{u\in\vp_0(\pa_0\Delta)}J(u)<c_\ast$,
\end{itemize}
where $\Delta=\{(t_1,t_2)\in\R^2:t_1,t_2\ge 0,\ t_1+t_2\le1\}$,
$\pa_1\Delta=\{0\}\times [0,1]$, $\pa_2\Delta=[0,1]\times\{0\}$ and
$\pa_0\Delta=\{(t_1,t_2)\in\R^2:t_1,t_2\ge 0,\ t_1+t_2=1\}$. Define
$$
c=\inf\limits_{\varphi\in\G}\sup\limits_{u\in\varphi(\triangle)\setminus
W}J(u),
$$
where $\G:=\left\{\varphi\in
C(\triangle,X):\varphi(\pa_1\triangle)\subset P,\
\varphi(\pa_2\triangle)\subset Q,\
\varphi|_{\pa_0\triangle}=\varphi_0|_{\pa_0\triangle}\right\}.$ Then
$c\ge c_\ast$ and $K_c\setminus W\not=\emptyset$. }

Now, we use Theorem A to prove the existence of a sign-changing
solution to problem ({\ref{q2}}), and for this we take $X=E$,
$P=P_\e^+$, $Q=P_\e^-$ and $J=I$. We will show that
$\{P_\e^+,P_\e^-\}$ is an admissible family of invariant sets for
the functional $I$ at any level $c\in\R$. Indeed, if $K_c\setminus
W=\emptyset$, then $K_c\subset W$. Since $\mu>4$, by Remark
\ref{r1}, it is easy to see that $I$ satisfies the (PS)-condition
and therefore $K_c$ is compact. Thus, $2\delta:=\mbox{dist}(K_c,\pa
W)>0$.

%Thus, there exists a compact set $N\subset W$ such that $K_c\subset
%N$ and $\mbox{dist}(N,\pa W):=2\delta>0$.

\begin{lemma}\lab{l5.1}
If $K_c\setminus W=\emptyset$, then there exists $\e_0>0$ such that,
for $0<\e<\e'<\e_0$, there exists a continuous map $\si:[0,1]\times
E\rightarrow E$ satisfying
\begin{itemize}
\item [{\rm(1)}] $\si(0,u)=u$ for $u\in E$;
\item [{\rm(2)}] $\si(t,u)=u$ for $t\in[0,1]$, $u\not\in
I^{-1}[c-\e',c+\e']$;
\item [{\rm(3)}] $\si(1,I^{c+\e}\setminus W)\subset I^{c-\e}$;
\item [{\rm(4)}] $\si(t,\overline{P_\e^+})\subset \overline{P_\e^+}$
and $\si(t,\overline{P_\e^-})\subset\overline{P_\e^-}$ for
$t\in[0,1]$.
\end{itemize}
\end{lemma}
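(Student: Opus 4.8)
The plan is to build the map $\sigma$ as the time-restricted flow of a suitably truncated, locally Lipschitz pseudo-gradient vector field derived from the operator $B$ of Lemma \ref{l4.2}, and then to verify the four listed properties. First I would fix, by Lemma \ref{l3.3} applied on the interval $[c-\e_0,c+\e_0]$ together with the (PS)-condition (valid since $\mu>4$, by Remark \ref{r1}), constants $\beta>0$ and $\e_0>0$ small so that: (a) $K_c\subset W$ and $2\delta:=\mathrm{dist}(K_c,\pa W)>0$; (b) on the set $\mathcal N:=\{u\in E: I(u)\in[c-\e_0,c+\e_0],\ \mathrm{dist}(u,K_c)\ge\delta/2\}$ one has $\|u-A(u)\|_E\ge\beta$; and (c) $\e_0<\min\{1,\beta^2/16,\delta^2/16\}$, which will make the various bookkeeping inequalities close. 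On $E_0=E\setminus K$ define the vector field $V(u)=u-B(u)$; by Lemma \ref{l4.2}(2)--(3) it is locally Lipschitz, bounded on bounded sets, satisfies $\tfrac12\|u-A(u)\|_E\le\|V(u)\|_E\le 2\|u-A(u)\|_E$ and $\langle I'(u),V(u)\rangle\ge\tfrac12\|u-A(u)\|_E^2$, and by Lemma \ref{l4.2}(1) it points "into" the cones on $\pa P_\e^\pm$. Then introduce two Lipschitz cutoffs: a spatial one $g(u)$ equal to $1$ on $I^{-1}[c-\e,c+\e]\cap\{\mathrm{dist}(u,K_c)\ge\delta\}$ and equal to $0$ outside $I^{-1}[c-\e',c+\e']\cup\{\mathrm{dist}(u,K_c)<\delta/2\}$, and a normalization $h(u)=\min\{1,\|V(u)\|_E^{-1}\}$ (or a smooth variant) so that the field $W(u):=-g(u)\,h(u)\,V(u)$ extends continuously (by $0$) across $K$, is locally Lipschitz on all of $E$, and has $\|W(u)\|_E\le 1$.

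Next I would let $\sigma(t,u)$ be the solution of $\tfrac{d}{dt}\sigma=W(\sigma)$, $\sigma(0,u)=u$; since $\|W\|_E\le1$ and $W$ is locally Lipschitz with linear growth, the flow is globally defined on $[0,1]\times E$ and jointly continuous, giving (1) immediately. Property (2) follows because $g\equiv0$ off $I^{-1}[c-\e',c+\e']$, so points there are stationary. For the invariance property (4), the key point is that $\overline{P_\e^+}$ and $\overline{P_\e^-}$ are positively invariant under the flow: this is a standard consequence of the fact that $B$ maps $\pa P_\e^\pm$ into $P_\e^\pm$ (Lemma \ref{l4.2}(1)), which implies $u-B(u)$ points strictly into the (convex) sets on their boundaries, hence $\mathrm{dist}(\sigma(t,u),P^\pm)$ cannot increase through $\e$; one checks this via the differential inequality for $t\mapsto\mathrm{dist}(\sigma(t,u),P^\pm)$, exactly as in \cite{Liu3,Liu4,BW2,Wang}. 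The substantive step is (3): the energy decay estimate. Along the flow, $\tfrac{d}{dt}I(\sigma(t,u))=-g(\sigma)h(\sigma)\langle I'(\sigma),V(\sigma)\rangle\le -\tfrac12 g(\sigma)h(\sigma)\|\sigma-A(\sigma)\|_E^2\le 0$, so $I$ is nonincreasing along $\sigma$. Now take $u\in I^{c+\e}\setminus W$ and suppose $I(\sigma(1,u))>c-\e$. Then for all $t\in[0,1]$ we have $I(\sigma(t,u))\in[c-\e,c+\e]$. I would argue that the trajectory cannot spend too much time near $K_c$: because $\|W\|_E\le1$ the trajectory has speed at most $1$, so if it ever enters $\{\mathrm{dist}(\cdot,K_c)<\delta/2\}$ it must have crossed the annulus of width $\delta/2$ between the $\delta/2$- and $\delta$-neighborhoods, but on that annulus (where $g=1$, $\|V\|_E\ge\beta/2$) the energy drops at rate $\ge\tfrac12 h\beta^2/4$, and crossing it at speed $\le1$ with $\|W\|_E=h\|V\|_E$... — more cleanly, on the set $T=\{t\in[0,1]:\mathrm{dist}(\sigma(t,u),K_c)\ge\delta\}$ one has $g(\sigma)=1$ and $\|\sigma-A(\sigma)\|_E\ge\beta$, hence $-\tfrac{d}{dt}I(\sigma)\ge\tfrac12 h(\sigma)\beta^2$, while on $[0,1]\setminus T$ (near $K_c$) I use that $u\notin W$ forces $\mathrm{dist}(u,K_c)\ge\delta$ initially so the trajectory needs total length $\ge\delta$ to reach $K_c$ and come back, giving a lower bound on the amount of "fast" time; combining, $I(\sigma(1,u))\le c+\e-c_1\min\{\e_0,\beta^2,\delta^2\}$ for an absolute constant $c_1$, and choosing $\e_0$ small (so that $\e<\e_0$ forces $2\e\le c_1\min\{\dots\}$) yields $I(\sigma(1,u))\le c-\e$, a contradiction. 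Hence (3).

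The main obstacle is precisely this last estimate: one must control the trajectory near the (compact, but possibly large) critical set $K_c$, where $\|V\|_E$ may be small and no uniform decay is available, and show the flow is nonetheless pushed across the gap of width $\delta$ given by $\mathrm{dist}(K_c,\pa W)$. The standard device — exploiting the speed bound $\|W\|_E\le1$ so that traversing a region of width $\delta$ costs time $\ge\delta$, on which region $g\equiv1$ and the decay rate is bounded below — handles this, but writing the competition between "time spent near $K_c$" and "time spent in the good annulus" carefully is the part that requires attention; everything else (global existence of the flow, continuity, cone invariance, (2)) is routine and follows the cited references \cite{Liu3,Liu4,Wang}.
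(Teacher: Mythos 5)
Your overall strategy is the same as the paper's: build a locally Lipschitz, truncated pseudo-gradient field from the operator $B$ of Lemma \ref{l4.2}, run its flow, get cone invariance from $B(\pa P_\e^\pm)\subset P_\e^\pm$, and prove the deformation estimate (3) from the lower bound of Lemma \ref{l3.3}. The only structural difference is cosmetic: you cap the speed by the factor $h$ and flow for unit time, whereas the paper normalizes the field, $V(u)=(u-B(u))/\|u-B(u)\|_E$, and flows for the rescaled time $16\e/\beta$; the paper's choice makes step (3) simpler, because with $\e<\e_0\le\beta\delta/32$ the total travel length is at most $16\e/\beta\le\delta/2$, so a trajectory starting outside $N_\delta(K_c)$ (which holds for $u\notin W$) can never reach $N_{\delta/2}(K_c)$, and no case analysis near $K_c$ is needed: $g\equiv1$ along the trajectory and the decay rate $\ge\beta/8$ gives the drop $2\e$ outright.

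There is, however, one genuine gap in your write-up: the assertion that $W(u)=-g(u)h(u)(u-B(u))$ ``extends continuously by $0$ across $K$ and is locally Lipschitz on all of $E$.'' With your cutoff, $g$ vanishes only near $K_c$ and outside the band $I^{-1}[c-\e',c+\e']$; it can be positive near fixed points of $A$ at critical levels in $(c-\e',c+\e')\setminus\{c\}$ that lie outside $N_{\delta/2}(K_c)$, and at such points $B$ is not defined (it lives on $E_0=E\setminus K$), so the zero-extension is continuous but not obviously locally Lipschitz, and well-posedness and continuity of the flow are not justified. The paper closes exactly this hole using the (PS) condition: after decreasing $\e_0$ one finds $\nu>0$ with $I^{-1}[c-\e_0,c+\e_0]\cap N_\nu(K)\subset N_{\delta/4}(K_c)$, whence $g\equiv0$ on $N_\nu(K)$ and $gV$ is genuinely locally Lipschitz on $E$; you need this (or an equivalent) step. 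A second, minor point: your final estimate is circular as written, since $\e_0$ appears inside the minimum that $2\e$ is supposed to be dominated by; the intended dichotomy (either the trajectory stays $\delta$-far from $K_c$ on $[0,1]$, giving a drop $\ge c_0$, or it must cross an annulus of width $\ge\delta$ at speed $\le1$, giving a drop $\ge c_0\delta$, with $c_0=c_0(\beta)$) does work once you simply require $\e_0<\tfrac{c_0}{2}\min\{1,\delta\}$, so this is a matter of bookkeeping rather than substance.
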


\begin{proof}
The proof is similar to the proof of \cite[Lemma 3.5]{Wang}. For the
sake of completeness, we give the details here. For $G\subset E$ and
$a>0$, let $N_a(G):=\{u\in E: \mbox{dist}(u,G)<a\}$. Then
$N_\delta(K_c)\subset W$. Since $I$ satisfies the (PS)-condition,
there exist $\e_0, \alpha>0$ such that
$$
\|I'(u)\|\ge\alpha\ \ \mbox{for}\ \ u\in
I^{-1}([c-\e_0,c+\e_0])\setminus N_\frac{\delta}{2}(K_c).
$$
By Lemmas \ref{l3.3} and \ref{l4.2}, there exists $\beta>0$ such
that
$$
\|u-B(u)\|_E\ge\beta\ \ \mbox{for}\ \ u\in
I^{-1}([c-\e_0,c+\e_0])\setminus N_\frac{\delta}{2}(K_c).
$$
Without loss of generality, assume that $\e_0\le
\frac{\beta\dd}{32}$. Let
$$
V(u)=\frac{u-B(u)}{\|u-B(u)\|_E}\ \ \ \mbox{for}\ \ u\in
E_0=E\setminus K,
$$
and take a cut-off function $g: E\rightarrow [0,1]$, which is
locally Lipschitz continuous, such that
\begin{eqnarray*}
g(u)=\left\{
\begin{array}{lll}
0,\ \mbox{if}\ \ u\not\in I^{-1}[c-\e',c+\e']\ \ \mbox{or}\ \
u\in N_{\frac{\dd}{4}}(K_c),\\
1,\ \mbox{if}\ \ u\in I^{-1}[c-\e,c+\e]\ \ \mbox{and}\ \ u\not\in
N_{\frac{\dd}{2}}(K_c).
\end{array}
\right.
\end{eqnarray*}
Decreasing $\varepsilon_0$ if necessary, one may find a $\nu>0$ such
that $I^{-1}[c-\e_0,c+\e_0]\cap N_\nu(K)\subset N_{\delta/4}(K_c)$,
and this can be seen as a consequence of the (PS) condition. Thus,
$g(u)=0$ for any $u\in N_\nu(K)$. By Lemma \ref{l4.2},
$g(\cdot)V(\cdot)$ is locally Lipschitz continuous on $E$.
%We can choose $\e_0>0$ small enough, such that for $0<\e'<\e_0$,
%$u\in I^{-1}[c-\e',c+\e']\cap K$ implies that $u\in
%N(\frac{\dd}{4})$. Indeed, if not, there exist $\e_n, \e_n'>0$ and
%$\{u_n\}\subset K$ such that $0<\e_n'<\e_n$, $\e_n\rg 0$ as
%$n\rg\iy$ and $u_n\in I^{-1}[c-\e_n',c+\e_n']$ with $u_n\not\in
%N(\frac{\dd}{4})$. Since $I$ satisfies the (PS)-condition, up to a
%sequence, $u_n\rg u\in K_c$ strongly in $E$. So $u_n\in
%N(\frac{\dd}{4})$ for $n$ large, which is a contradiction.

Consider the following initial value problem
\begin{equation}\lab{a5} \left\{
\begin{array}{lll}
\frac{d\tau}{dt}=-g(\tau)V(\tau),\\
\tau(0,u)=u.
\end{array}
\right. \end{equation} For any $u\in E$, one sees that problem
(\ref{a5}) admits a unique solution $\tau(\cdot,u)\in C(\R^+,E)$.
Define $\si(t,u)=\tau(\frac{16\e}{\beta}t,u)$. It suffices to check
(3) and (4) since $(1)$ and $(2)$ are obvious.

To verify $(3)$ we let $u\in I^{c+\e}\setminus W$. By Lemma
\ref{l4.2}, $I(\tau(t,u))$ is decreasing for $t\ge 0$. If there
exists $t_0\in [0,\frac{16\e}{\beta}]$ such that
$I(\tau(t_0,u))<c-\e$ then
$I(\si(1,u))=I(\tau(\frac{16\e}{\beta},u))<c-\e$. Otherwise, for any
$t\in[0,\frac{16\e}{\beta}]$, $I(\tau(t,u))\ge c-\e$. Then,
$\tau(t,u)\in I^{-1}[c-\e,c+\e]$ for $t\in[0,\frac{16\e}{\beta}]$.
We claim that for any $t\in[0,\frac{16\e}{\beta}]$,
$\tau(t,u)\not\in N_{\frac{\dd}{2}}(K_c)$. If, for some
$t_1\in[0,\frac{16\e}{\beta}]$, $\tau(t_1,u)\in
N_{\frac{\dd}{2}}(K_c)$, then, since $u\not\in N_\delta(K_c)$,
$$
\frac{\dd}{2}\le\|\tau(t_1,u)-u\|_E\le\int_0^{t_1}\|\tau'(s,u)\|_Eds\le
t_1\le \frac{16\e}{\beta},
$$
which contradicts the fact that
$\varepsilon<\varepsilon_0\leq\frac{\beta\delta}{32}$. So
$g(\tau(t,u))\equiv1$ for $t\in[0,\frac{16\e}{\beta}]$. Then by (2)
and (3) of Lemma \ref{l4.2},
\begin{align*}
I(\tau(\frac{16\e}{\beta},u))&=I(u)-\int_0^{\frac{16\e}{\beta}}\langle
I'(\tau(s,u)),V(\tau(s,u))\rangle\\
&\le
I(u)-\int_0^{\frac{16\e}{\beta}}\frac{1}{8}\|\tau(s,u)-B\tau(s,u)\|_E\\
&\le c+\e-\frac{16\e}{\beta}\frac{\beta}{8}=c-\e.
\end{align*}
Finally, $(4)$ is a consequence of (1) of Lemma \ref{l4.2} (see
\cite{Liu1} for a detailed proof).
\end{proof}

\begin{corollary}\lab{cor1}
$\{P_\e^+,P_\e^-\}$ is an admissible family of invariant sets for
the functional $I$ at any level $c\in\R$.
\end{corollary}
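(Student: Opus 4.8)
The plan is to derive this immediately from Lemma~\ref{l5.1}. First I would fix $c\in\R$; if $K_c\setminus W\neq\emptyset$ there is nothing to check, so I would assume $K_c\subset W$. Then I would invoke the observation already recorded just before Lemma~\ref{l5.1}: since $\mu>4$, Remark~\ref{r1} yields the (PS) condition for $I$, so $K_c$ is compact and $2\delta:=\mathrm{dist}(K_c,\pa W)>0$. These are exactly the standing hypotheses under which Lemma~\ref{l5.1} is proved, so it is available here.

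Next I would apply Lemma~\ref{l5.1}, obtaining an $\e_0>0$ and, for every pair $0<\e<\e'<\e_0$, a continuous map $\si:[0,1]\times E\to E$ enjoying properties (1)--(4) of that lemma. I would specialise by taking $\e':=\tfrac32\e$ (shrinking $\e_0$ once and for all so that $0<\e<\e'<\e_0$ holds for all $\e\in(0,\e_0)$) and setting $\eta:=\si(1,\cdot)$, which is continuous from $E$ to $E$ because $\si$ is. It then remains only to match the three conditions in the definition of an admissible family, with $X=E$, $J=I$, $P=P_\e^+$, $Q=P_\e^-$, and $W=P_\e^+\cup P_\e^-$. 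Condition~(1), namely $\eta(\overline{P_\e^+})\subset\overline{P_\e^+}$ and $\eta(\overline{P_\e^-})\subset\overline{P_\e^-}$, is Lemma~\ref{l5.1}(4) evaluated at $t=1$. Condition~(3), $\eta(I^{c+\e}\setminus W)\subset I^{c-\e}$, is verbatim Lemma~\ref{l5.1}(3). For condition~(2): if $u\in I^{c-2\e}$ then $I(u)\le c-2\e<c-\e'$, since $\e'=\tfrac32\e<2\e$, hence $u\notin I^{-1}[c-\e',c+\e']$ and Lemma~\ref{l5.1}(2) forces $\eta(u)=\si(1,u)=u$; thus $\eta\mid_{I^{c-2\e}}=\id$. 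Since $c$ was arbitrary, this establishes admissibility at every level, as required.

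I do not anticipate a genuine obstacle: all the deformation-theoretic work has already been carried out in Lemma~\ref{l5.1} (resting in turn on Lemmas~\ref{l3.3} and~\ref{l4.2} and the (PS) condition), and this corollary is purely a matter of unwinding definitions. The only point deserving a moment's care is the interplay of the two parameters $\e$ and $\e'$: the definition's condition~(2) is phrased on the sublevel set $J^{c-2\e}$, whereas Lemma~\ref{l5.1}(2) only pins $\si$ down off the band $I^{-1}[c-\e',c+\e']$, so one must pick $\e'$ strictly between $\e$ and $2\e$ (any such choice works, and $\e'=\tfrac32\e$ is merely convenient) and shrink $\e_0$ correspondingly.
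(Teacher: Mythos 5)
Your proposal is correct and matches the paper's proof, which simply states that the corollary follows from Lemma~\ref{l5.1}; you carry out exactly the intended unwinding of the definition (taking $\eta=\si(1,\cdot)$ and matching conditions (1)--(3)). Your care with the parameter choice $\e<\e'<2\e$ (e.g.\ $\e'=\tfrac32\e$, shrinking $\e_0$) to obtain $\eta\mid_{I^{c-2\e}}=\id$ is precisely the small detail the paper leaves implicit.
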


\begin{proof}
The conclusion follows from Lemma \ref{l5.1}.
\end{proof}

In the following, we will construct $\vp_0$ satisfying the
hypotheses in Theorem A. Choose $v_1,v_2\in C_0^\infty(\mathbb
R^3)\setminus\{0\}$ satisfying
$\mbox{supp}(v_1)\cap\mbox{supp}(v_2)=\emptyset$ and $v_1\le
0,v_2\ge 0$. Let $\vp_0(t,s):=R(tv_1+sv_2)$ for $(t,s)\in\Delta$,
where $R$ is a positive constant to be determined later. Obviously,
for $t,s\in[0,1]$, $\vp_0(0,s)=Rsv_2\in P_\e^+$ and
$\vp_0(t,0)=Rtv_1\in P_\e^-$.

\begin{lemma}\lab{l5.2}
For $q\in[2,6]$ there exists $m_q>0$ independent of $\e$ such that
$\|u\|_q\le m_q\e$ for $u\in M=P_\e^+\cap P_\e^-$.
\end{lemma}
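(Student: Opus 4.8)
The plan is to estimate the $L^q$-norm of any $u \in M = P_\e^+ \cap P_\e^-$ by exploiting the fact that such a $u$ lies simultaneously within distance $\e$ of both the positive cone $P^+$ and the negative cone $P^-$. First I would observe that $\mathrm{dist}(u,P^-) < \e$ means there is a $w \in P^-$ with $\|u-w\|_E < \e$; since $w \le 0$, the pointwise inequality $0 \le u^+ \le u - w = (u-w)^+ \le |u-w|$ holds a.e., hence $\|u^+\|_E \le \|u-w\|_E < \e$, and taking the infimum over $w \in P^-$ gives $\|u^+\|_E \le \mathrm{dist}(u,P^-) < \e$. Symmetrically, $\|u^-\|_E \le \mathrm{dist}(u,P^+) < \e$. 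Actually it is cleaner to pass through the $L^q$-estimate \eqref{123} already recorded in the proof of Lemma \ref{l4.1}: there exists $m_q > 0$ (coming from the compact embedding $E \hookrightarrow L^q(\R^3)$ for $q \in [2,6)$, and from $\mathcal{D}^{1,2} \hookrightarrow L^6$ for $q = 6$) with $\|u^\pm\|_q \le m_q\,\mathrm{dist}(u,P^\mp)$, and these $m_q$ do not depend on $\e$.

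Next I would combine the two one-sided bounds. For $u \in M$ we have both $\mathrm{dist}(u,P^+) < \e$ and $\mathrm{dist}(u,P^-) < \e$, so \eqref{123} yields $\|u^+\|_q \le m_q\,\mathrm{dist}(u,P^-) < m_q\e$ and $\|u^-\|_q \le m_q\,\mathrm{dist}(u,P^+) < m_q\e$. Since $u = u^+ - u^-$ pointwise (with the convention $u^- \ge 0$), $|u| = u^+ + u^-$, hence
\[
\|u\|_q \le \|u^+\|_q + \|u^-\|_q < 2m_q\e.
\]
Relabelling $2m_q$ as $m_q$ (or simply keeping the factor $2$, which is harmless) gives the claimed inequality $\|u\|_q \le m_q\e$ with $m_q$ independent of $\e$, for every $q \in [2,6]$.

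There is no real obstacle here; the only point that needs a word of care is the uniformity of the constants $m_q$ in $\e$, which is immediate because the embedding constants in \eqref{123} are intrinsic to the spaces $E$ and $L^q(\R^3)$ and have nothing to do with the parameter $\e$ defining the neighborhoods $P_\e^\pm$. One should also note that the estimate for $q = 6$ uses the continuous Sobolev embedding $\mathcal{D}^{1,2}(\R^3) \hookrightarrow L^6(\R^3)$ rather than compactness, which is fine since only a norm bound is needed. This lemma will then feed into the energy estimates on $M$ needed to verify hypotheses (2) and (3) of Theorem A for the map $\vp_0$.
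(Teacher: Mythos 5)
Your proof is correct and takes essentially the same route as the paper, whose entire proof of this lemma is the citation of \eqref{123}: apply it to $u^+$ and $u^-$, use $|u|=u^++u^-$, and absorb the factor $2$ into $m_q$. (One small caution: your opening claim $\|u^+\|_E\le\|u-w\|_E$ for $w\in P^-$ is not justified, since pointwise domination does not control the gradient part of the $E$-norm, but you discard it in favor of the $L^q$-estimate \eqref{123}, so the final argument is unaffected.)
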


\begin{proof} This follows from \eqref{123}.
\end{proof}

\begin{lemma}\lab{l5.3}
If $\e>0$ is small enough then $I(u)\ge\frac{\e^2}{2}$ for
$u\in\Sigma=\pa P_\e^+\cap\pa P_\e^-$, that is,
$c_\ast\ge\frac{\e^2}{2}$.
\end{lemma}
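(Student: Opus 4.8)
The plan is to estimate $I$ from below on $\Sigma$ by combining two elementary facts valid for every $u\in\Sigma$: a lower bound $\|u\|_E^2\ge2\e^2$, and the smallness $\|u\|_q\le2m_q\e$ in every $L^q$, $q\in[2,6]$. Since $P_\e^+$ and $P_\e^-$ are open, a point of their common boundary satisfies $\mathrm{dist}(u,P^+)=\mathrm{dist}(u,P^-)=\e$. First I would use $u^+\in P^+$ and $-u^-\in P^-$ together with $u-u^+=-u^-$ and $u+u^-=u^+$ to get $\|u^-\|_E\ge\mathrm{dist}(u,P^+)=\e$ and $\|u^+\|_E\ge\mathrm{dist}(u,P^-)=\e$; since $u^+u^-\equiv0$ and $\na u^+\cdot\na u^-\equiv0$ a.e., the orthogonal splitting $\|u\|_E^2=\|u^+\|_E^2+\|u^-\|_E^2$ then yields $\|u\|_E^2\ge2\e^2$. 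In parallel, \eqref{123} with $q\in[2,6]$ gives $\|u^+\|_q\le m_q\,\mathrm{dist}(u,P^-)=m_q\e$ and $\|u^-\|_q\le m_q\,\mathrm{dist}(u,P^+)=m_q\e$, hence $\|u\|_q\le2m_q\e$ on $\Sigma$.

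Next I would estimate the functional. By $(f_1)$-$(f_2)$, for every $\delta>0$ there is $C_\delta>0$ with $|F(t)|\le\frac\delta2 t^2+C_\delta|t|^p$ (integrate the pointwise bound $|f(t)|\le\delta|t|+C_\delta|t|^p$ already used in the proof of Lemma \ref{l4.1}). Since $\phi_u\ge0$ by Lemma \ref{l0}(1), dropping the nonlocal term and inserting the bounds $\|u\|_2\le2m_2\e$, $\|u\|_p\le2m_p\e$ and $\|u\|_E^2\ge2\e^2$ gives
\[
I(u)\ \ge\ \tfrac12\|u\|_E^2-\tfrac\delta2\|u\|_2^2-C_\delta\|u\|_p^p
\ \ge\ \e^2\bigl(1-2\delta m_2^2-2^pC_\delta m_p^p\,\e^{p-2}\bigr).
\]
Now I would fix $\delta>0$ so small that $2\delta m_2^2\le\frac14$, and then choose $\e_0>0$ so that $2^pC_\delta m_p^p\,\e^{p-2}\le\frac14$ for all $\e\in(0,\e_0)$ (possible since $p>3$, so $p-2>1$). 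For such $\e$ this yields $I(u)\ge\frac{\e^2}{2}$ for every $u\in\Sigma$, whence $c_\ast=\inf_{u\in\Sigma}I(u)\ge\frac{\e^2}{2}$.

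I do not anticipate a genuine obstacle: this is the quantitative mountain-pass estimate near the origin, restricted to the ``corner'' $\Sigma$, and the $\phi_u$-term only helps. The one point worth isolating is that on $\Sigma$ one has $\|u\|_E^2\ge2\e^2$ rather than merely $\|u\|_E^2\ge\e^2$; this extra factor $2$, which comes precisely from $u$ being at $E$-distance $\e$ from \emph{both} cones, is what absorbs the subtracted lower-order terms and leaves the clean constant $\frac12$ in the statement. Everything else is the Sobolev-embedding bookkeeping already performed in Lemmas \ref{l4.1} and \ref{l5.2}.
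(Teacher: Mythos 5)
Your proof is correct and follows essentially the same route as the paper: the bound $\|u^\pm\|_E\ge\mathrm{dist}(u,P^\mp)=\e$ giving $\frac12\|u\|_E^2\ge\e^2$, discarding the nonnegative term $\int_{\R^3}\phi_u u^2$, and absorbing $\int_{\R^3}F(u)$ via the $(f_1)$-$(f_2)$ growth bound together with the $L^q$-smallness on $\Sigma$ from \eqref{123}/Lemma \ref{l5.2}. The only difference is bookkeeping: you keep $\delta$ free and fix it at the end, whereas the paper fixes the quadratic coefficient as $\frac{1}{3m_2^2}$ from the start.
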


\begin{proof} For $u\in\pa P_\e^+\cap\pa P_\e^-$, we have
$\|u^\pm\|_E\ge\mbox{dist}(u,P^\mp)=\e$. By $(f_1)$-$(f_2)$, we have
$F(t)\le\frac{1}{3m_2^2}|t|^2+C_1|t|^{p}$ for all $t\in\R$. Then,
using Lemma \ref{l5.2}, we see that
$$
I(u)\ge\e^2-\frac{1}{3}\e^2-C_2\e^{p}\ge\frac{\e^2}{2},
$$
for $\e$ small enough.
%Then $\|u^+\|_3\le C\mbox{dist}(u,P^-)=C\e$ and $\|u^+\|_{p}\le
%C\mbox{dist}(u,P^-)=C\e$. Similar estimates hold for $u^-$.
\end{proof}

\begin{proof}[{\bf Proof of Theorem \ref{Th1}
(Existence part)}] It suffices to verify assumptions $(2)$-$(3)$ in
applying Theorem A. Observe that $\rho=\min\{\|tv_1+(1-t)v_2\|_2:\
0\leq t\leq1\}>0$. Then, $\|u\|_2\geq\rho R$ for
$u\in\vp_0(\pa_0\Delta)$ and it follows from Lemma \ref{l5.2} that
$\vp_0(\pa_0\Delta)\cap M=\emptyset$ for $R$ large enough. By
$(f_3)$, we have $F(t)\ge C_1|t|^\mu-C_2$ for any $t\in\R$. For any
$u\in \vp_0(\pa_0\Delta)$, by Lemma \ref{l0},
\begin{align*}
I(u)&\le\frac{1}{2}\|u\|_E^2+C_3\|u\|_E^4
-\int_{\mbox{supp}(v_1)\cup\mbox{supp}(v_2)}F(u)\\
&\le\frac{1}{2}\|u\|_E^2+C_3\|u\|_E^4-C_1\|u\|_\mu^\mu+C_4,
\end{align*}
which together with Lemma \ref{l5.3} implies that, for $R$ large
enough and $\e$ small enough,
$$\sup\limits_{u\in\vp_0(\pa_0\Delta)}I(u)<0<c_\ast.$$
According to Theorem A, $I$ has at least one critical point $u$ in
$E\setminus(P_\e^+\cup P_\e^-)$, which is a sign-changing solution
of equation (\ref{q2}). Then $(u,\phi_u)$ is a sign-changing
solution of system \eqref{q1}.
\end{proof}

\subsection{Existence of infinitely many sign-changing solutions}

In this subsection, we prove the existence of infinitely many
sign-changing solutions to system (\ref{q1}). For this we will make
use of \cite[Theorem 2.5]{Wang}, which we recall below.

%To obtain infinitely many sign-changing solutions, we introduce the
%following critical point theorem in relation to multiple invariant
%sets, which is due to.

We will use the notations from Subsection 3.3. Assume $G:X\rg X$ to
be an isometric involution, that is, $G^2=id$ and $d(Gx,Gy)=d(x,y)$
for $x,y\in X$. We assume $J$ is $G-$invariant on $X$ in the sense
that $J(Gx)=J(x)$ for any $x\in X$. We also assume $Q=GP$. A subset
$F\subset X$ is said to be symmetric if $Gx\in F$ for any $x\in F$.
The genus of a closed symmetric subset $F$ of $X\setminus\{0\}$ is
denoted by $\gamma(F)$.

\bd\lab{wa} {\rm(\cite{Wang}) $P$ is called a $G-$admissible
invariant set with respect to $J$ at level $c$, if the following
deformation property holds: there exist $\e_0>0$ and a symmetric
open neighborhood $N$ of $K_c\setminus W$ with
$\gamma(\overline{N})<\iy$, such that for $\e\in(0,\e_0)$ there
exists $\eta\in C(X,X)$ satisfying
\begin{itemize}
\item [(1)] $\eta(\overline{P})\subset \overline{P}$, $\eta(\overline{Q})\subset
\overline{Q}$;
\item [(2)] $\eta\circ G=G\circ\eta$;
\item [(3)] $\eta\mid_{J^{c-2\e}}=id$;
\item [(4)] $\eta(J^{c+\e}\setminus(N\cup W))\subset J^{c-\e}$.
\end{itemize}}
\ed

\noindent{\bf Theorem B.} (\cite{Wang}) {\it Assume that $P$ is a
$G$-admissible invariant set with respect to $J$ at any level $c\ge
c^\ast:=\inf_{u\in\Sg}J(u)$ and for any $n\in \mathbb{N}$, there
exists a continuous map $\varphi_n:B_n:=\{x\in\R^n:|x|\le
1\}\rightarrow X$ satisfying
\begin{itemize}
\item [{\rm(1)}] $\varphi_n(0)\in M:=P\cap Q$,
$\varphi_n(-t)=G\varphi_n(t)$ for $t\in B_n$,
\item [{\rm(2)}] $\varphi_n(\pa B_n)\cap M=\emptyset$,
\item [{\rm(3)}] $\sup_{u\in{\rm Fix}_G\cup\varphi_n(\pa
B_n)}J(u)<c^\ast,$ where ${\rm Fix}_G:=\{u\in X:Gu=u\}$.
\end{itemize}
For $j\in\mathbb{N}$, define
$$c_j=\inf\limits_{B\in\G_j}\sup\limits_{u\in B\setminus
W}J(u),$$ where
\begin{align*}
\G_j:=\left\{B\Big|\ \begin{array}{ll} B=\varphi(B_n\setminus Y)
\hbox{ for some $\varphi\in G_n$, $n\geq j$, and open $Y\subset B_n$}\\
\hbox{such that $-Y=Y$ and $\g(\bar{Y})\le n-j$}
\end{array}\right\}
\end{align*}
and
\begin{align*}
G_n:=\left\{\varphi\Big|\ \begin{array}{ll} \varphi\in C(B_n,X),\
\hbox{$\varphi(-t)=G\varphi(t)$ for $t\in B_n$,}\\
\hbox{$\varphi(0)\in M$ and $\varphi|_{\pa B_n}=\varphi_n|_{\pa
B_n}$}
\end{array}\right\}.\end{align*}
Then for $j\ge 2$, $c_j\ge c_\ast$, $K_{c_j}\setminus
W\not=\emptyset$ and $c_j\rg\iy$ as $j\rg\iy$.} \vskip 0.1in

To apply Theorem B, we take $X=E$, $G=-id$, $J=I$ and $P=P_\e^+$.
Then $M=P_\e^+\cap P_\e^-$, $\Sigma=\pa P_\e^+\cap \pa P_\e^-$, and
$W=P_\e^+\cup P_\e^-$. In this subsection, $f$ is assumed to be odd,
and, as a consequence, $I$ is even. Now, we show that $P_\e^+$ is a
$G$-admissible invariant set for the functional $I$ at any level
$c$. Since $K_c$ is compact, there exists a symmetric open
neighborhood $N$ of $K_c\setminus W$ such that
$\gamma(\overline{N})<\iy$.
%Moreover, there exists a compact set $\ti{N}\subset N\cup W$ such
%that $K_c\subset \ti{N}$ and $\mbox{dist}(\ti{N},\pa(N\cup
%W)):=2\dd>0.$

\begin{lemma}\lab{l6.1} There exists $\e_0>0$ such that for $0<\e<\e'<\e_0$,
there exists a continuous map $\si:[0,1]\times E\rightarrow E$
satisfying
\begin{itemize}
\item [{\rm(1)}] $\si(0,u)=u$ for $u\in E$.
\item [{\rm(2)}] $\si(t,u)=u$ for $t\in[0,1]$, $u\not\in I^{-1}[c-\e',c+\e']$.
\item [{\rm(3)}] $\si(t,-u)=-\si(t,u)$ for $(t,u)\in[0,1]\times E$.
\item [{\rm(4)}] $\si(1,I^{c+\e}\setminus (N\cup W))\subset I^{c-\e}$.
\item [{\rm(5)}] $\si(t,\overline{P_\e^+})\subset \overline{P_\e^+}$,
$\si(t,\overline{P_\e^-})\subset
\overline{P_\e^-}$ for $t\in[0,1]$.
\end{itemize}
\end{lemma}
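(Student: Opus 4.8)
The plan is to construct $\sigma$ exactly as in the proof of Lemma~\ref{l5.1}, but carrying along the $G$-equivariance that the symmetric setup now permits. Since $f$ is odd, Lemma~\ref{l4.2}(4) gives that $B$ is odd, hence $V(u) = (u-B(u))/\|u-B(u)\|_E$ is odd on $E_0$, and one chooses the cut-off function $g$ to be even (this is possible because $I$ is even, $K_c$ is symmetric, and the sets $N_{\delta/4}(K_c)$, $N_{\delta/2}(K_c)$, $I^{-1}[c-\varepsilon',c+\varepsilon']$ are all symmetric). Then the vector field $g(\cdot)V(\cdot)$ is odd and locally Lipschitz, so the flow $\tau$ defined by \eqref{a5} satisfies $\tau(t,-u) = -\tau(t,u)$ by uniqueness of solutions of the ODE, which yields (3) after rescaling time to define $\sigma(t,u) = \tau(\tfrac{16\varepsilon}{\beta}t, u)$. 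Properties (1), (2), (5) then follow verbatim from the argument in Lemma~\ref{l5.1}, using Lemma~\ref{l4.2}(1) for the invariance of $\overline{P_\varepsilon^\pm}$.

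The one genuinely new point is (4): the flow must push $I^{c+\varepsilon}\setminus(N\cup W)$ down below $c-\varepsilon$, where now we are allowed to exclude a neighborhood $N$ of $K_c\setminus W$ rather than only $W$ itself. The key is to choose $N$ small enough — shrinking it so that $N \subset N_{\delta/4}(K_c)$ where $2\delta = \operatorname{dist}(K_c,\partial W)$, which is legitimate since $K_c\setminus W$ is a compact set away from $K_c\cap W$. With $g$ vanishing on $N_{\delta/4}(K_c)$ by construction, any point starting in $I^{c+\varepsilon}\setminus(N\cup W)$ either flows below level $c-\varepsilon$ (done), or stays in $I^{-1}[c-\varepsilon,c+\varepsilon]$ for all $t\in[0,\tfrac{16\varepsilon}{\beta}]$. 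In the latter case the orbit cannot enter $N_{\delta/2}(K_c)$: the same length estimate as before, $\tfrac{\delta}{2} \le \|\tau(t_1,u)-u\|_E \le t_1 \le \tfrac{16\varepsilon}{\beta} < \tfrac{\delta}{2}$, gives a contradiction, provided we also know $u\notin N_\delta(K_c)$ at the start. Here the assumption $u\notin N$ does not directly give $u\notin N_\delta(K_c)$, so one instead argues: the points of the orbit that lie in $I^{-1}[c-\varepsilon,c+\varepsilon]$ and outside $N_{\delta/2}(K_c)$ have $g\equiv 1$ and enjoy $\|u-B(u)\|_E\ge\beta$ by Lemmas~\ref{l3.3} and~\ref{l4.2}, so along that portion $I$ decreases at rate at least $\tfrac{\beta}{8}$; one checks the orbit cannot spend its whole time inside $N_{\delta/2}(K_c)$ because $K_c\setminus W \subset N$ and $W$ is invariant, so a point outside $N\cup W$ that entered $N_{\delta/2}(K_c)$ would have had to travel a distance at least $\operatorname{dist}(N_{\delta/2}(K_c)\setminus N,\, \text{start})$, again bounded below. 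Assembling these cases as in Lemma~\ref{l5.1} gives $I(\sigma(1,u)) \le c+\varepsilon - \tfrac{16\varepsilon}{\beta}\cdot\tfrac{\beta}{8} = c-\varepsilon$.

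Concretely, the steps I would carry out are: (i) fix $\varepsilon_0,\alpha>0$ from the (PS) condition so that $\|I'(u)\|\ge\alpha$ on $I^{-1}[c-\varepsilon_0,c+\varepsilon_0]\setminus N_{\delta/2}(K_c)$, and $\beta>0$ from Lemmas~\ref{l3.3}, \ref{l4.2} so that $\|u-B(u)\|_E\ge\beta$ there; (ii) shrink $N$ so that $\overline N \subset N_{\delta/4}(K_c)$ and $\gamma(\overline N)<\infty$; (iii) build the even cut-off $g$ vanishing on $I^{-1}[c-\varepsilon',c+\varepsilon']^c \cup N_{\delta/4}(K_c)$ and equal to $1$ on $I^{-1}[c-\varepsilon,c+\varepsilon]\setminus N_{\delta/2}(K_c)$, shrinking $\varepsilon_0$ further so $I^{-1}[c-\varepsilon_0,c+\varepsilon_0]\cap N_\nu(K)\subset N_{\delta/4}(K_c)$ for some $\nu>0$; (iv) solve the ODE \eqref{a5}, set $\sigma(t,u)=\tau(\tfrac{16\varepsilon}{\beta}t,u)$, and verify (1)--(5). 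The main obstacle is bookkeeping the geometry around $K_c$ — making sure the excluded neighborhood $N$, the cut-off region $N_{\delta/4}(K_c)$, and the ``good'' region where $g\equiv 1$ and the gradient is large are nested correctly so that every orbit segment is accounted for; the $G$-equivariance itself is essentially free once $g$ is chosen even and $B$ is odd. For completeness I would note this mirrors \cite[Lemma 3.6]{Wang} and \cite{Liu1}, referring there for the routine verification of (5).
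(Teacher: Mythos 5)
Your equivariance argument (odd $B$ from Lemma \ref{l4.2}(4), an even cut-off $g$, uniqueness for the ODE \eqref{a5}) is exactly what the paper intends -- its proof of Lemma \ref{l6.1} is just ``same as Lemma \ref{l5.1}, and $\sigma$ is odd because $B$ is odd.'' The problem is your treatment of property (4), where your geometric bookkeeping is set up backwards. First, you take $2\delta=\mathrm{dist}(K_c,\partial W)$ as in Lemma \ref{l5.1}; but that definition was made under the hypothesis $K_c\setminus W=\emptyset$, i.e.\ $K_c\subset W$. In the present lemma $K_c\setminus W$ is allowed to be nonempty (this is precisely the interesting case -- these are the sign-changing critical points), and then $K_c$ may meet $\partial W$ or lie outside $\overline W$, so $\mathrm{dist}(K_c,\partial W)$ can be $0$ and is in any case not the relevant quantity. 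Second, and more seriously, shrinking $N$ so that $\overline N\subset N_{\delta/4}(K_c)$ goes in the wrong direction: with $g\equiv 0$ on $N_{\delta/4}(K_c)$, any point of $N_{\delta/4}(K_c)\setminus(N\cup W)$ is a rest point of the flow, so a point $u$ of this set with $I(u)\in(c-\varepsilon,c+\varepsilon]$ (such points exist near a critical point in $K_c\setminus W$ once $N$ is strictly thinner than $N_{\delta/4}(K_c)$ there) satisfies $\sigma(1,u)=u\notin I^{c-\varepsilon}$, and (4) fails. Your attempted patch only discusses orbits that \emph{enter} $N_{\delta/2}(K_c)$ and invokes an unsubstantiated lower bound on $\mathrm{dist}(N_{\delta/2}(K_c)\setminus N,\,u)$; it does not address orbits that \emph{start} inside the dead zone, which is where the construction actually breaks.

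The correct nesting is the opposite one: you need the starting set $I^{c+\varepsilon}\setminus(N\cup W)$ to stay uniformly away from $K_c$, i.e.\ the region where $g$ is switched off must sit inside $N\cup W$ near $K_c$. Since $K_c\setminus W\subset N$, we have $K_c\subset N\cup W$, which is open, and $K_c$ is compact (by the (PS) condition for $I$, available here because $\mu>4$); hence $2\delta:=\mathrm{dist}\big(K_c,\,E\setminus(N\cup W)\big)>0$. With this $\delta$ every $u\notin N\cup W$ satisfies $\mathrm{dist}(u,K_c)\ge 2\delta$, and the proof of Lemma \ref{l5.1} (cut-off vanishing on $N_{\delta/4}(K_c)$, equal to $1$ on $I^{-1}[c-\varepsilon,c+\varepsilon]\setminus N_{\delta/2}(K_c)$, the lower bound $\|u-B(u)\|_E\ge\beta$ from Lemmas \ref{l3.3} and \ref{l4.2}, and $\varepsilon_0\le\beta\delta/32$) goes through verbatim, giving (4); equivalently one may enlarge $N$ (keeping it symmetric, with $\gamma(\overline N)<\infty$, e.g.\ a finite union of balls and their antipodes) so that it contains a uniform neighborhood of $K_c\setminus W$. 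With that single correction, the rest of your proposal -- properties (1), (2), (5) as in Lemma \ref{l5.1}, and (3) from oddness of $g(\cdot)V(\cdot)$ -- is sound and coincides with the paper's intended argument.
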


\begin{proof}
The proof is similar to the proof of Lemma \ref{l5.1}. Since $I$ is
even, $B$ is odd and thus $\sigma$ is odd in $u$.
\end{proof}

\begin{corollary}
$P_\e^+$ is a $G-$admissible invariant set for the functional $I$ at
any level $c$.
\end{corollary}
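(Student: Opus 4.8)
The plan is to unwind Definition \ref{wa} and observe that every ingredient has already been produced, so that the corollary is essentially a bookkeeping exercise built on top of Lemma \ref{l6.1}. With the choices $P=P_\e^+$, $G=-\mathrm{id}$, $J=I$ we have $Q=GP=P_\e^-$, $W=P_\e^+\cup P_\e^-$, $M=P_\e^+\cap P_\e^-$, and $I$ is even since $f$ is odd. Thus, fixing a level $c$, I must exhibit $\e_0>0$, a symmetric open neighborhood $N$ of $K_c\setminus W$ with $\gamma(\overline N)<\iy$, and for each $\e\in(0,\e_0)$ a map $\eta\in C(E,E)$ satisfying items (1)--(4) of that definition.

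First I would fix $N$. Because $\mu>4$, Remark \ref{r1} gives the (PS) condition, so $K_c$ is compact; then $K_c\setminus W=K_c\cap(E\setminus W)$ is a compact symmetric set, and hence admits a bounded symmetric open neighborhood $N$ with finite genus — exactly the observation recorded just before Lemma \ref{l6.1}. Next I would take $\e_0$ to be the constant given by Lemma \ref{l6.1}, shrunk so that $\e\in(0,\e_0)$ forces $2\e<\e_0$. For $\e\in(0,\e_0)$ I would choose any $\e'$ with $\e<\e'<2\e$ (hence also $\e'<\e_0$) and set $\eta:=\si(1,\cdot)$, with $\si$ the flow map of Lemma \ref{l6.1}. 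Then item (1) of Definition \ref{wa} is Lemma \ref{l6.1}(5) evaluated at $t=1$ (recalling $\overline Q=\overline{P_\e^-}$); item (2), namely $\eta\circ G=G\circ\eta$, is Lemma \ref{l6.1}(3) at $t=1$ together with $G=-\mathrm{id}$; item (3) holds because $I(u)\le c-2\e<c-\e'$ forces $u\notin I^{-1}[c-\e',c+\e']$, whence $\eta(u)=u$ by Lemma \ref{l6.1}(2); and item (4) is precisely Lemma \ref{l6.1}(4). This makes $P_\e^+$ a $G$-admissible invariant set at level $c$, and since $c$ is arbitrary the corollary follows.

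There is no genuine obstacle here: the analytic content — the locally Lipschitz, odd, cone-preserving operator $B$ of Lemma \ref{l4.2} and the equivariant descending deformation it generates in Lemma \ref{l6.1} (the equivariant counterpart of Lemma \ref{l5.1}) — has already been established. The only point requiring a little care is the mismatch between the threshold $c-2\e$ in property (3) of Definition \ref{wa} and the "freezing'' region $I^{-1}[c-\e',c+\e']$ appearing in Lemma \ref{l6.1}; this is resolved by the harmless normalization $2\e<\e_0$ together with the choice $\e<\e'<2\e$ above. The other point I would check explicitly is that $N$ may be taken symmetric with $\gamma(\overline N)<\iy$ — replacing $N$ by $N\cap GN$ if necessary, which preserves both properties — so that item (2) of Definition \ref{wa} is consistent with the placement of $N$.
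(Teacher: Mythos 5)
Your proposal is correct and follows essentially the same route as the paper, which simply deduces the corollary from Lemma \ref{l6.1} (just as Corollary \ref{cor1} follows from Lemma \ref{l5.1}), using the compactness of $K_c$ from the (PS) condition to get the symmetric neighborhood $N$ of $K_c\setminus W$ with $\gamma(\overline N)<\iy$. Your explicit handling of the threshold mismatch (choosing $\e<\e'<2\e<\e_0$ so that $\eta={\si}(1,\cdot)$ is the identity on $I^{c-2\e}$) is exactly the routine bookkeeping the paper leaves implicit.
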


%\begin{proof} Let $\eta=\si(1,\cdot)$, then it follows from Lemma \ref{l6.1}
%that the conclusion holds.\end{proof}

\begin{proof}[{\bf Proof of Theorem \ref{Th1}
(Multiplicity part)}] According to Theorem B, if $\vp_n$ exists and
satisfies the assumptions in Theorem B then $I$ has infinitely many
critical points in $E\setminus(P_\e^+\cup P_\e^-)$, which are
sign-changing solutions to \eqref{q2} and thus yield sign-changing
solution to \eqref{q1}. It suffices to construct $\vp_n$. For any
$n\in\mathbb{N}$, choose $\{v_i\}_1^n\subset C_0^\infty(\mathbb
R^3)\setminus\{0\}$ such that
$\mbox{supp}(v_i)\cap\mbox{supp}(v_j)=\emptyset$ for $i\not=j$. We
define $\varphi_n\in C(B_n,E)$ as
$$\varphi_n(t)=R_n\sum_{i=1}^nt_iv_i,\quad t=(t_1,t_2,\cdots,t_n)\in B_n,$$
where $R_n>0$. For $R_n$ large enough, it is easy to check that all
the assumptions of Theorem B are satisfied.
%Obviously, $\varphi_n(0)=0\in P_\e^+\cap P_\e^-$ and
%$\varphi_n(-t)=-\varphi_n(t)$ for $t\in B_n$. For any $u\in
%P_\e^+\cap P_\e^-$, we can get that $\|u\|_2\le C\e$($C$ is
%independent of $u$). Thus, we can choose $R_n$ large enough, such
%that $\varphi_n(\pa B_n)\cap (P_\e^+\cap P_\e^-)=\emptyset$.
%Finally, by $(f_4)$, $F(t)\ge C|t|^\mu-c$ for any $t$. Then,
%similarly as above, for some $R_n>0$, we have
%$$\sup\limits_{u\in\varphi_n(\pa B_n)}I(u)<0.$$
\end{proof}

\s{Proof of Theorem \ref{Th2}}
\renewcommand{\theequation}{4.\arabic{equation}}

In this section, we do not assume $\mu>4$ and thus the argument of
Section 3 which essentially depends on the assumption $\mu>4$ is not
valid in the present case. This obstacle will be overcome via a
perturbation approach which is originally due to \cite{Wang}. The
method from Section 3 can be used for the perturbed problem. By
passing to the limit, we then obtain sign-changing solutions of the
original problem (\ref{q1}).

Fix a number $r\in(p,6)$. For any fixed $\la\in(0,1]$, we consider
the modified problem
\begin{equation}\lab{m}
-\DD u+V(x)u+\phi_uu=f(u)+\la|u|^{r-2}u,\quad u\in E
\end{equation}
and its associated functional
$$
I_\la(u)=I(u)-\frac{\la}{r}\int_{\RN}|u|^r.
$$
It is standard to show that $I_\la\in C^1(E,\R)$ and
$$
\langle I_\la'(u),v\rangle=\langle
I'(u),v\rangle-\la\int_{\RN}|u|^{r-2}uv,\quad u,\ v\in E.
$$
For any $u\in E$, we denote by $v=A_\la(u)\in E$ the unique solution
to the problem
$$
-\DD v+V(x)v+\phi_uv=f(u)+\la|u|^{r-2}u,\quad v\in E.
$$
As in Section 3, one verifies that the operator $A_\la:E\rightarrow
E$ is well defined and is continuous and compact. In the following,
if the proof of a result is similar to its counterpart in Section 3,
it will not be written out.

\begin{lemma}\lab{l3.22} \noindent
\begin{itemize}
\item [{\rm (1)}] $\langle I_\la'(u),u-A_\la(u)\rangle\ge \|u-A_\la(u)\|_E^2$
for all $u\in E$;
\item [{\rm (2)}] there exists $C>0$ independent of $\la$ such that
$\|I_\la'(u)\|\le \|u-A_\la(u)\|_E(1+C\|u\|_E^2)$ for all $u\in E$.
\end{itemize}
\end{lemma}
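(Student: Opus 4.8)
The plan is to follow the proof of Lemma~\ref{l3.2} almost verbatim. The one new feature is that the perturbation term $-\frac{\la}{r}\int_{\R^3}|u|^r$ in $I_\la$ and the term $\la|u|^{r-2}u$ in the equation defining $A_\la$ cancel against each other when one forms the pairing $\langle I_\la'(u),u-A_\la(u)\rangle$, and it is precisely this cancellation that makes the constant in part~(2) independent of $\la$.

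First I would write down the weak form of $v:=A_\la(u)$: for every $\varphi\in E$,
\[
(v,\varphi)_E+\int_{\R^3}\phi_uv\varphi=\int_{\R^3}f(u)\varphi+\la\int_{\R^3}|u|^{r-2}u\varphi .
\]
Subtracting this from the expansion
\[
\langle I_\la'(u),\varphi\rangle=(u,\varphi)_E+\int_{\R^3}\phi_uu\varphi-\int_{\R^3}f(u)\varphi-\la\int_{\R^3}|u|^{r-2}u\varphi ,
\]
which follows from the formula for $I'$ given in Section~2, the terms carrying $f(u)$ and $|u|^{r-2}u$ drop out and one is left with the $\la$-free identity
\[
\langle I_\la'(u),\varphi\rangle=(u-A_\la(u),\varphi)_E+\int_{\R^3}\phi_u(u-A_\la(u))\varphi
=(u-A_\la(u),\varphi)_E+D\big(u^2,(u-A_\la(u))\varphi\big).
\]
Taking $\varphi=u-A_\la(u)$ and using $\phi_u\ge0$ (Lemma~\ref{l0}(1)) then yields
\[
\langle I_\la'(u),u-A_\la(u)\rangle=\|u-A_\la(u)\|_E^2+\int_{\R^3}\phi_u(u-A_\la(u))^2\ge\|u-A_\la(u)\|_E^2,
\]
which is part~(1).

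For part~(2) I would bound the last term in the identity above exactly as in Lemma~\ref{l3.2}: by Lemmas~\ref{l0} and~\ref{l1} (Cauchy--Schwarz for $D$, together with the Sobolev embedding $E\hookrightarrow L^{12/5}(\R^3)$),
\[
\big|D\big(u^2,(u-A_\la(u))\varphi\big)\big|\le C\|u\|_E^2\,\|u-A_\la(u)\|_E\,\|\varphi\|_E ,
\]
so that $\|I_\la'(u)\|\le\|u-A_\la(u)\|_E(1+C\|u\|_E^2)$. The single point requiring a word of care — and essentially the only content of the statement — is that this $C$ arises solely from Lemmas~\ref{l0}, \ref{l1} and the Sobolev constant, none of which involves $\la$, so that $C$ is independent of $\la\in(0,1]$, as is needed for the subsequent passage to the limit $\la\to0$. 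I do not anticipate any genuine obstacle: once the perturbation terms have cancelled, the argument is word for word that of Section~3.
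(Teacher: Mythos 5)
Your proposal is correct and follows exactly the route the paper intends: the paper omits the proof of this lemma precisely because it is the argument of Lemma \ref{l3.2} repeated, with the $f(u)$ and $\la|u|^{r-2}u$ terms cancelling in the pairing so that the estimate and the constant $C$ (coming only from Lemmas \ref{l0}, \ref{l1} and the Sobolev embedding) are independent of $\la$. Nothing further is needed.
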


\begin{lemma}\lab{l3.33}
For any $\la\in(0,1)$, $a<b$ and $\alpha>0$, there exists
$\beta(\la)>0$ such that $\|u-A_\la(u)\|_E\ge\beta(\la)$ for any
$u\in E$ with $I_\la(u)\in [a,b]$ and $\|I_\la'(u)\|\ge\alpha$.
\end{lemma}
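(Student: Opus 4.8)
The plan is to mimic the proof of Lemma~\ref{l3.3} almost verbatim, with the extra care that the constant $\beta(\la)$ is allowed to depend on $\la$ (which is why the modified lemma only claims a $\la$-dependent lower bound and only for $\la\in(0,1)$). First I would reproduce the algebraic identity obtained by testing with $u-A_\la(u)$: from $(f_3)$ and the definition of $A_\la$ one has
\begin{align*}
I_\la(u)-\frac{1}{\mu}(u,u-A_\la(u))_E
&=\Big(\frac12-\frac1\mu\Big)\|u\|_E^2+\Big(\frac14-\frac1\mu\Big)\int_{\R^3}\phi_uu^2\\
&\quad+\frac1\mu\int_{\R^3}\phi_uu(u-A_\la(u))+\int_{\R^3}\Big(\frac1\mu f(u)u-F(u)\Big)\\
&\quad+\la\Big(\frac1\mu-\frac1r\Big)\int_{\R^3}|u|^r.
\end{align*}
Since $r>p\ge\mu>3$ and $\la>0$, the last term is nonnegative, as is the integral involving $f$ by $(f_3)$; discarding these nonnegative pieces gives exactly the inequality \re{a2} of Section~3, namely $\|u\|_E^2+\int_{\R^3}\phi_uu^2\le C_1(|I_\la(u)|+\|u\|_E\|u-A_\la(u)\|_E+|\int_{\R^3}\phi_uu(u-A_\la(u))|)$, with $C_1$ independent of $\la$.

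Next I would estimate the nonlocal cross term exactly as in Lemma~\ref{l3.3}, using Lemma~\ref{l0} and Lemma~\ref{l1} together with Cauchy--Schwarz, to obtain $|\int_{\R^3}\phi_uu(u-A_\la(u))|\le C_2\|u\|_E\|u-A_\la(u)\|_E(\int_{\R^3}\phi_uu^2)^{1/2}$. Substituting this and absorbing, one arrives at the analogue of \re{a3}:
$$
\|u\|_E^2\le C_3\big(|I_\la(u)|+\|u\|_E\|u-A_\la(u)\|_E+\|u\|_E^2\|u-A_\la(u)\|_E^2\big),
$$
again with $C_3$ independent of $\la$. Then I would argue by contradiction: if for some $\la\in(0,1)$ there were a sequence $\{u_n\}$ with $I_\la(u_n)\in[a,b]$, $\|I_\la'(u_n)\|\ge\alpha$ and $\|u_n-A_\la(u_n)\|_E\to0$, the displayed inequality would force $\{\|u_n\|_E\}$ to be bounded, and then Lemma~\ref{l3.22}(2) (whose constant $C$ is uniform in $\la$) would give $\|I_\la'(u_n)\|\le\|u_n-A_\la(u_n)\|_E(1+C\|u_n\|_E^2)\to0$, contradicting $\|I_\la'(u_n)\|\ge\alpha$. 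Hence such a $\beta(\la)>0$ exists.

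I do not expect a genuine obstacle here: the only subtlety is bookkeeping about which constants are $\la$-independent. The key points are that the perturbation term $\la\|u\|_r^r/r$ enters the Pohozaev-type/Nehari-type combination $I_\la(u)-\frac1\mu(u,u-A_\la(u))_E$ with a \emph{favourable} sign (because $r>\mu$), so it can simply be thrown away, and that the estimates from Lemmas~\ref{l0}--\ref{l1} used to control the nonlocal term are insensitive to $\la$. The $\la$-dependence of $\beta$ ultimately comes only through the a priori bound on $\|u_n\|_E$ along a hypothetical bad sequence, and since that bound already depends on $a,b,\alpha$ anyway it costs nothing to let it depend on $\la$ as well; one does not need uniformity in $\la$ for this lemma, only for Lemma~\ref{l3.22}.
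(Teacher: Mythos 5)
There is a genuine gap, and it sits exactly at the point where Section~4 differs from Section~3. You take the multiplier $\tfrac1\mu$ in the combination $I_\la(u)-\tfrac1\mu(u,u-A_\la(u))_E$, so the nonlocal term enters with the coefficient $\tfrac14-\tfrac1\mu$. This coefficient is nonnegative only when $\mu\ge 4$; but Lemma~\ref{l3.33} belongs to the setting of Theorem~\ref{Th2}, where $\mu>4$ is \emph{not} assumed (handling $\mu\in(3,4]$, e.g.\ $f(u)=|u|^{p-2}u$ with $p\in(3,4)$, is the whole point of the perturbation). For $\mu\le 4$ the term $\bigl(\tfrac14-\tfrac1\mu\bigr)\int_{\R^3}\phi_uu^2$ has the wrong sign, so you cannot "discard the nonnegative pieces" and land on the Section~3 inequality: $\int_{\R^3}\phi_uu^2$ cannot be kept on the left, and then the cross-term estimate
$$
\left|\int_{\R^3}\phi_uu(u-A_\la(u))\right|\le C\,\|u\|_E\,\|u-A_\la(u)\|_E\left(\int_{\R^3}\phi_uu^2\right)^{\frac12}
$$
has nothing to be absorbed into, so the analogue of \re{a3} does not follow. (Lemma~\ref{l0}(2) only gives the quartic bound $\int_{\R^3}\phi_uu^2\le C\|u\|^4$, which is useless here.) Your argument is correct precisely in the case $\mu>4$, where Lemma~\ref{l3.3} already applies and the perturbation is unnecessary.

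The paper's proof is structured to avoid this: it fixes $\gamma\in(4,r)$ and uses $I_\la(u)-\tfrac1\gamma(u,u-A_\la(u))_E$, so the coefficient $\tfrac14-\tfrac1\gamma$ of the nonlocal term is positive. The price is that $\tfrac1\gamma f(u)u-F(u)$ no longer has a favourable sign (since possibly $\mu<\gamma$), which produces the extra error $\|u\|_p^p$ in \re{s01}; moreover the term $\la\|u\|_r^r$ is \emph{not} thrown away but kept on the left, because the boundedness of a hypothetical bad sequence is then no longer immediate. It is obtained by a separate argument: interpolating $L^p$ between $L^2$ and $L^r$ to get $\|u_n\|_p^p\le C(\la)\|u_n\|_2^2$, normalizing $w_n=u_n/\|u_n\|_E$, and using the degree-$4$ homogeneity of $\int\phi_uu^2$ together with the compact embedding to force $w=0$ in $L^2$, contradicting $\|w_n\|_2^2\ge (C_4(\la))^{-1}$. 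This is also why $\beta(\la)$ genuinely depends on $\la$ (the constants degenerate as $\la\to0^+$), not merely through bookkeeping of an a priori bound. To repair your proof you would need to reproduce this $\gamma$-based estimate and the interpolation/normalization step rather than reduce to \re{a2}--\re{a3}.
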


\begin{proof} Fix a number $\gamma\in(4,q)$. For $u\in E$,
\begin{align*}
&I_\la(u)-\frac{1}{\gamma}(u,u-A_\la(u))_E\\
&=\left(\frac{1}{2}-\frac{1}{\gamma}\right)\|u\|_E^2
+\left(\frac{1}{4}-\frac{1}{\gamma}\right)\int_{\R^3}\phi_uu^2\\
&\ \ \ \ +\frac{1}{\gamma}\int_{\R^3}\phi_uu(u-A_\la(u))
+\int_{\R^3}\big(\frac{1}{\gamma}f(u)u-F(u)\big)
+\la\left(\frac{1}{\gamma}-\frac{1}{r}\right)\|u\|_r^r.
\end{align*}
Then, by $(f_1)$-$(f_2)$,
\begin{align*}&\|u\|_E^2+\int_{\R^3}\phi_uu^2+\la\|u\|_r^r\\
&\ \le
C_1\left(|I_\la(u)|+\|u\|_E\|u-A_\la(u)\|_E+\|u\|_p^p
+\left|\int_{\R^3}\phi_uu(u-A_\la(u))\right|\right).
\end{align*}
Since
\begin{align*} \left|\int_{\R^3}\phi_uu(u-A_\la(u))\right|
\le
C_2\|u\|_E\|u-A_\la(u)\|_E\left(\int_{\R^3}\phi_uu^2\right)^{\frac{1}{2}},
\end{align*}
one sees that
\begin{align}\lab{s01}&\|u\|_E^2+\int_{\R^3}\phi_uu^2+\la\|u\|_r^r\nonumber\\
&\ \le
C_3\left(|I_\la(u)|+\|u\|_p^p+\|u\|_E\|u-A_\la(u)\|_E
+\|u\|_E^2\|u-A_\la(u)\|_E^2\right).
\end{align}
If there exists $\{u_n\}\subset E$ with $I_\la(u_n)\in [a,b]$ and
$\|I_\la'(u_n)\|\ge\alpha$ such that $\|u_n-A_\la(u_n)\|_E\rg 0$ as
$n\rg\iy$, then it follows from (\ref{s01}) that, for large $n$,
\begin{align*}
\|u_n\|_E^2+\int_{\R^3}\phi_{u_n}u_n^2+\la\|u_n\|_r^r\le
C_4(1+\|u_n\|_p^p).
\end{align*}

{\bf Claim: $\{u_n\}$ is bounded in $E$.} Otherwise, assume that
$\|u_n\|_E\rg\iy$ as $n\rg\iy$. Then
\begin{align}\lab{s02}
\|u_n\|_E^2+\int_{\R^3}\phi_{u_n}u_n^2+\la\|u_n\|_r^r\le
C_5\|u_n\|_p^p.
\end{align}
By (\ref{s02}) there exists $C(\la)>0$ such that for large $n$,
$$
\|u_n\|_2^2+\|u_n\|_r^r\le C(\la)\|u_n\|_p^p.
$$
Let $t\in(0,1)$ be such that
$$\frac{1}{p}=\frac{t}{2}+\frac{1-t}{r}.$$
Then, by the interpolation inequality,
$$
\|u_n\|_2^2+\|u_n\|_r^r\le C(\la)\|u_n\|_p^p\le
C(\la)\|u_n\|_2^{tp}\|u_n\|_r^{(1-t)p},
$$
from which it follows that there exist $C_1(\la),C_2(\la)>0$ such
that, for large $n$,
$$
C_1(\la)\|u_n\|_2^{\frac{2}{r}}\le\|u_n\|_r\le
C_2(\la)\|u_n\|_2^{\frac{2}{r}}.
$$
Thus $\|u_n\|_p^p\leq C_3(\la)\|u_n\|_2^2$ and, by (\ref{s02})
again,
\begin{align*}
\|u_n\|_E^2+\int_{\R^3}\phi_{u_n}u_n^2+\la\|u_n\|_r^r\le
C_4(\la)\|u_n\|_2^2.
\end{align*}
Let $w_n=\frac{u_n}{\|u_n\|_E}$. The last inequality implies that
\begin{align}\label{p01}
\|w_n\|_2^2\geq(C_4(\la))^{-1}
\end{align}
and
\begin{align}\label{p02}
\int_{\R^3}\phi_{w_n}w_n^2\le C_5(\la)\|u_n\|_E^{-2}.
\end{align}
From \eqref{p02}, we have $\int_{\R^3}\phi_{w_n}w_n^2\rg 0$ as
$n\rg\iy$. Since $\|w_n\|_E=1$, we assume that $w_n\rg w$ weakly in
$E$ and strongly both in $L^\frac{12}{5}(\R^3)$ and in $L^2(\R^3)$.
Note that
\begin{align*}
\left|\int_{\R^3}(\phi_{w_n}w_n^2-\phi_ww^2)\right|
\leq&\int_{\R^3}|\phi_{w_n}-\phi_w|w_n^2+\int_{\R^3}\phi_w|w_n^2-w^2|\\
\leq&\|\phi_{w_n}-\phi_w\|_6\|w_n\|_{\frac{12}{5}}^2
+\|\phi_w\|_6\|w_n-w\|_{\frac{12}{5}}\|w_n+w\|_{\frac{12}{5}}.
\end{align*}
Since $w_n\to w$ strongly in $L^\frac{12}{5}(\R^3)$ and, by Lemma
\ref{l0}, $\phi_{w_n}\to\phi_w$ strongly in $L^6(\R^3)$, we have
$$
\int_{\R^3}\phi_ww^2=\lim_{n\to\infty}\int_{\R^3}\phi_{w_n}w_n^2=0,
$$
which implies $w=0$. But \eqref{p01} implies
$\|w\|_2^2\geq(C_4(\la))^{-1}$, and thus we have a contradiction and
finish the proof of the claim. The claim combined with Lemma
\ref{l3.22} implies $\|I_\la'(u_n)\|\rg 0$ as $n\rg\iy$, which is
again a contradiction.
\end{proof}

\begin{lemma}\lab{l4.3}
There exists $\e_1>0$ independent of $\la$ such that for
$\e\in(0,\e_1)$,
\begin{itemize}
\item [{\rm (1)}] $A_\la(\pa P_\e^-)\subset P_\e^-$
and every nontrivial solution $u\in P_\e^-$ is negative.
\item [{\rm (2)}] $A_\la(\pa P_\e^+)\subset P_\e^+$
and every nontrivial solution $u\in P_\e^+$ is positive.
\end{itemize}
\end{lemma}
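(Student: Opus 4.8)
The plan is to repeat the proof of Lemma \ref{l4.1} almost verbatim, carrying the extra term $\la|u|^{r-2}u$ from \eqref{m} through the computation, and to check at the end that every constant entering the size of $\e_1$ is independent of $\la\in(0,1]$. As in Lemma \ref{l4.1} the two assertions are symmetric, so I would write out only (1); assertion (2) follows by the same argument with $P^-$ replaced by $P^+$ and $v^+$ by $v^-$, using that $f(t)<0$ and $|t|^{r-2}t<0$ for $t<0$ (the former by $(f_3)$).

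So fix $\la\in(0,1]$, let $u\in E$ and $v=A_\la(u)$, so that $-\DD v+V(x)v+\phi_uv=f(u)+\la|u|^{r-2}u$ in the weak sense. Testing this identity against $v^+\in E$ and using $\mbox{dist}(v,P^-)\le\|v^+\|_E$ together with $\phi_u\ge0$ (Lemma \ref{l0}), one gets
$$\mbox{dist}(v,P^-)\|v^+\|_E\le\|v^+\|_E^2=(v,v^+)_E=\int_{\R^3}\big((f(u)+\la|u|^{r-2}u)v^+-\phi_u(v^+)^2\big)\le\int_{\R^3}(f(u^+)+\la(u^+)^{r-1})v^+,$$
where the last inequality uses that $f$ and $t\mapsto|t|^{r-2}t$ vanish at $0$ and are sign preserving, so that $f(u)v^+\le f(u^+)v^+$ and $|u|^{r-2}u\,v^+\le(u^+)^{r-1}v^+$ pointwise. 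Bounding $f(u^+)\le\delta u^++C_\delta(u^+)^{p-1}$ from $(f_1)$-$(f_2)$, applying H\"older's inequality and \eqref{123}, and cancelling $\|v^+\|_E$ (the case $v^+\equiv0$ being trivial), I obtain, with constants $C,C_\delta$ coming only from $f$ and from the Sobolev embeddings and using $\la\le1$,
$$\mbox{dist}(A_\la(u),P^-)\le C\big(\delta\,\mbox{dist}(u,P^-)+C_\delta\,\mbox{dist}(u,P^-)^{p-1}+\mbox{dist}(u,P^-)^{r-1}\big).$$

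To finish, I choose $\delta$ so small that $C\delta\le\frac14$, and then, since $p>3$ and $r>p$ give $p-2>0$ and $r-2>0$, I fix $\e_1>0$ depending only on $C$, $C_\delta$, $p$, $r$ (hence not on $\la$) so small that $CC_\delta\e_1^{p-2}\le\frac14$ and $C\e_1^{r-2}\le\frac14$; then for $\e\in(0,\e_1)$ and $\mbox{dist}(u,P^-)<\e$ one has $\mbox{dist}(A_\la(u),P^-)\le\frac34\,\mbox{dist}(u,P^-)$, so in particular $A_\la(\pa P_\e^-)\subset P_\e^-$. Finally, if $u\in P_\e^-$ is a nontrivial fixed point of $A_\la$ then $u\in P^-$, i.e. $u\le0$, and $u$ solves \eqref{m}; writing this as $-\DD u+(V(x)+\phi_u)u=f(u)+\la|u|^{r-2}u$ with right-hand side $\le0$ on $\{u<0\}$, the strong maximum principle gives $u<0$ in $\R^3$. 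The only point that is genuinely new compared with Lemma \ref{l4.1} is the claim that $\e_1$ can be chosen independent of $\la$, and this is not really an obstacle: the perturbation contributes exactly one extra term $\la\,\mbox{dist}(u,P^-)^{r-1}$, which — because $r-1>1$ and $\la\le1$ — is absorbed by a small multiple of $\mbox{dist}(u,P^-)$ once $\mbox{dist}(u,P^-)$ is small, uniformly in $\la\in(0,1]$.
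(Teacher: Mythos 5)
Your proof is correct and follows essentially the route the paper intends: the paper omits the proof of this lemma, referring back to the argument of Lemma \ref{l4.1}, and your computation is exactly that argument with the extra term $\la|u|^{r-2}u$ carried through, the key new point being that since $\la\le1$ and $r-1>1$ the perturbation only adds a term $C\,\mbox{dist}(u,P^-)^{r-1}$ that is absorbed uniformly in $\la$, so $\e_1$ can indeed be chosen independent of $\la$.
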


\begin{lemma}\lab{l4.4}
There exists a locally Lipschitz continuous map $B_\la: E\setminus
K_\la\to E$, where $K_\la:={\rm Fix}(A_\la)$, such that
\begin{itemize}
\item [{\rm(1)}] $B_\la(\pa P_\e^+)\subset P_\e^+$,
$B_\la(\pa P_\e^-)\subset P_\e^-$ for $\e\in(0,\e_1)$;
\item [{\rm(2)}]
$\frac{1}{2}\|u-B_\la(u)\|_E\le\|u-A_\la(u)\|_E\le2\|u-B_\la(u)\|_E$
for
all $u\in E\setminus K_\la$;
\item [{\rm(3)}] $\langle I_\la'(u),u-B_\la(u)\rangle\ge
\frac{1}{2}\|u-A_\la(u)\|_E^2$ for all $u\in E\setminus K_\la$;
\item [{\rm(4)}] if $f$ is odd then $B_\la$ is odd.
\end{itemize}
\end{lemma}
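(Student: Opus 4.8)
The plan is to construct $B_\la$ by adapting the standard construction of a locally Lipschitz pseudo-gradient-type vector field associated to a merely continuous operator, exactly as carried out in \cite[Lemma 4.1]{Liu3} and \cite[Lemma 2.1]{Liu4}, and noted already in Lemma \ref{l4.2}. The point of departure is that $A_\la$ already has all the desired \emph{qualitative} features (Lemma \ref{l3.22}, Lemma \ref{l4.3}, and oddness when $f$ is odd), but it is only continuous, so it cannot be used to define a flow. First I would fix, for each $u\in E\setminus K_\la$, a small radius $\rho(u)>0$ such that on the ball $B(u,\rho(u))$ the continuous function $w\mapsto w-A_\la(w)$ varies by less than, say, $\tfrac13\|u-A_\la(u)\|_E$ and such that $\rho(u)<\tfrac13\operatorname{dist}(u,K_\la)$; shrinking $\rho(u)$ further, one can also arrange that $B(u,\rho(u))$ stays inside $P_\e^+$ whenever $u\in P_\e^+$ (and likewise for $P_\e^-$), since $P_\e^\pm$ are open. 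Then take a locally finite refinement of $\{B(u,\rho(u))\}_{u\in E\setminus K_\la}$ with a subordinate locally Lipschitz partition of unity $\{\pi_j\}$ and centers $u_j$, and set
\[
B_\la(u)=\sum_j \pi_j(u)\,A_\la(u_j),\qquad u\in E\setminus K_\la .
\]
This $B_\la$ is locally Lipschitz on $E\setminus K_\la$ because the sum is locally finite and the $\pi_j$ are locally Lipschitz.

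Next I would verify properties (1)--(4). For (1): if $u\in\pa P_\e^+$ then every ball of the cover meeting a neighbourhood of $u$ has its center $u_j$ in $P_\e^+$ (by the shrinking above), hence $A_\la(u_j)\in P_\e^+$ by Lemma \ref{l4.3}(2); since $B_\la(u)$ is a convex combination of such points and $P_\e^+$ is convex, $B_\la(u)\in P_\e^+$. The argument for $\pa P_\e^-$ is identical using Lemma \ref{l4.3}(1). For (2): the defining estimate on $\rho(u_j)$ gives $\|(u-A_\la(u))-(u_j-A_\la(u_j))\|_E<\tfrac13\|u-A_\la(u)\|_E$ for every active index $j$, whence $\|u-B_\la(u)\|_E=\|\sum_j\pi_j(u)\big((u-A_\la(u_j))\big)\|_E$ lies within a $\tfrac13$-relative error of $\|u-A_\la(u)\|_E$; this yields both inequalities in (2) with room to spare. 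For (3): from Lemma \ref{l3.22}(1), $\langle I_\la'(w),w-A_\la(w)\rangle\ge\|w-A_\la(w)\|_E^2$; combining with the local closeness of $u-A_\la(u_j)$ to $u-A_\la(u)$ and with the continuity of $I_\la'$ (choosing $\rho(u)$ small enough that $\|I_\la'(w)-I_\la'(u)\|$ is also controlled on $B(u,\rho(u))$), one estimates $\langle I_\la'(u),u-B_\la(u)\rangle$ from below by $\langle I_\la'(u),u-A_\la(u)\rangle$ minus a small error, giving $\ge\tfrac12\|u-A_\la(u)\|_E^2$. For (4): if $f$ is odd then $A_\la$ is odd and $K_\la$ is symmetric; replacing $B_\la$ by its odd part $\tfrac12\big(B_\la(u)-B_\la(-u)\big)$ preserves (1)--(3) (the sets $P_\e^\pm$ are interchanged by $u\mapsto-u$, so averaging respects (1); (2) and (3) are convexity/linearity estimates that survive averaging) and makes it odd.

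I expect the main obstacle to be nothing deep but rather the bookkeeping in property (3): one must choose the radii $\rho(u)$ so that \emph{simultaneously} the vector field $u-A_\la(u)$, the functional derivative $I_\la'(u)$, and the membership in $P_\e^\pm$ are all adequately controlled on $B(u,\rho(u))$, and one must track the constants $\tfrac13$ versus the target constants $\tfrac12$ carefully so that the accumulated errors in the convex combination still leave the stated inequalities valid. Since this is entirely parallel to \cite[Lemma 4.1]{Liu3} and \cite[Lemma 2.1]{Liu4}, and the only new ingredient — the presence of the nonlocal term $\phi_u u$ — has already been absorbed into Lemma \ref{l3.22} and Lemma \ref{l4.3}, I would, as the authors do, simply indicate the construction above and refer to those papers for the remaining routine details.
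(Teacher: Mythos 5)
Your overall route is the same as the paper's: the paper gives no argument for this lemma beyond the remark that it is the counterpart of Lemma \ref{l4.2}, whose proof is in turn delegated to \cite[Lemma 4.1]{Liu3} and \cite[Lemma 2.1]{Liu4}, i.e.\ exactly the partition-of-unity construction $B_\la(u)=\sum_j\pi_j(u)A_\la(u_j)$ that you sketch. Items (2), (3) and (4) of your sketch are fine modulo the bookkeeping you acknowledge (for (3) note that what you actually need is not the modulus of continuity of $I_\la'$ but the bound $\|I_\la'(u)\|\le\|u-A_\la(u)\|_E(1+C\|u\|_E^2)$ of Lemma \ref{l3.22}(2), combined with the continuity of $A_\la$, to make the error term $\langle I_\la'(u),A_\la(u)-A_\la(u_j)\rangle$ small relative to $\|u-A_\la(u)\|_E^2$; and your averaging trick for (4) does preserve (2)--(3) precisely because of the $\tfrac13$-variation condition you imposed).

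There is, however, a genuine flaw in your verification of (1). You shrink the radii so that balls \emph{centered in} $P_\e^+$ stay inside $P_\e^+$, and then claim that for $u\in\pa P_\e^+$ every active center $u_j$ lies in $P_\e^+$. This does not follow: since $P_\e^+$ is open, $u\in\pa P_\e^+$ does not belong to $P_\e^+$, and nothing in your construction prevents $u$ from being covered by balls whose centers lie \emph{outside} $\overline{P_\e^+}$ (such centers can be arbitrarily close to $\pa P_\e^+$, and their radii are positive). For those centers, Lemma \ref{l4.3} gives you no information about $A_\la(u_j)$, so the convexity argument collapses. The standard repair is the opposite constraint: for centers $v\notin\overline{P_\e^\pm}$ require $\rho(v)\le\tfrac12\,\mathrm{dist}(v,\overline{P_\e^\pm})$, so that balls centered outside these closed sets never meet them; then for $u\in\pa P_\e^+$ all active centers lie in $\overline{P_\e^+}$, and one uses the quantitative contraction established in the proof of Lemma \ref{l4.1}/\ref{l4.3}, namely $\mathrm{dist}(A_\la(w),P^-)\le\tfrac12\,\mathrm{dist}(w,P^-)$ for all $w$ with $\mathrm{dist}(w,P^-)\le\e<\e_1$ (a stronger fact than the boundary statement you quote, and valid on $\overline{P_\e^-}$ by continuity), together with the convexity of $w\mapsto\mathrm{dist}(w,P^\mp)$, to get $\mathrm{dist}(B_\la(u),P^\mp)\le\sum_j\pi_j(u)\,\mathrm{dist}(A_\la(u_j),P^\mp)<\e$. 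With this correction your construction matches the one the paper implicitly invokes.
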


We are ready to prove Theorem \ref{Th2}.

\begin{proof}[{\bf Proof of Theorem \ref{Th2} (Existence part)}]
{\bf Step 1.} We use Theorem A for $J=I_\la$. We claim that
$\{P_\e^+,P_\e^-\}$ is an admissible family of invariant sets for
the functional $I_\la$ at any level $c$. In view of the approach in
Section 3 and the fact that we have already had Lemmas 4.1-4.4, we
need only to prove that for any fixed $\la\in(0,1)$, $I_\la$
satisfies the (PS)-condition. Assume that there exist
$\{u_n\}\subset E$ and $c\in\R$ such that $I_\la(u_n)\rg c$ and
$I_\la'(u_n)\rg0$ as $n\rg\iy$. Similar to the proof of Lemma
\ref{l3.33}, we have, for $\gamma\in(4,q)$,
\begin{align*}
I_\la(u_n)&-\frac{1}{\gamma}\langle I_\la'(u_n),u_n\rangle\\
=&\left(\frac{1}{2}-\frac{1}{\gamma}\right)\|u_n\|_E^2
+\left(\frac{1}{4}-\frac{1}{\gamma}\right)\int_{\R^3}\phi_{u_n}u_n^2\\
&+\int_{\R^3}\big(\frac{1}{\gamma}f(u_n)u_n-F(u_n)\big)
+\la\left(\frac{1}{\gamma}-\frac{1}{r}\right)\|u_n\|_r^r.
\end{align*}
By $(f_1)$-$(f_2)$,
\begin{align*}&\|u_n\|_E^2+\int_{\R^3}\phi_{u_n}u_n^2+\la\|u_n\|_r^r
\le
C_1\left(|I_\la(u_n)|+\|u_n\|_E\|I_\la'(u_n)\|+\|u_n\|_p^p\right).
\end{align*}
Hence, for large $n$,
\begin{align*}
\lab{s02}\|u_n\|_E^2+\int_{\R^3}\phi_{u_n}u_n^2+\la\|u_n\|_r^r\le
C_2(1+\|u_n\|_p^p).
\end{align*}
As in the proof of Lemma \ref{l3.33}, one sees that $\{u_n\}$ is
bounded in $E$. Then, by Remark \ref{r1}, one can show that
$\{u_n\}$ has a convergent subsequence, verifying the
(PS)-condition.

{\bf Step 2.} Choose $v_1,v_2\in C_0^\infty(B_1(0))\setminus\{0\}$
such that $\mbox{supp}(v_1)\cap\mbox{supp}(v_2)=\emptyset$ and
$v_1\le 0,v_2\ge 0$, where $B_r(0):=\{x\in\R^3:\ |x|<r\}$. For
$(t,s)\in\Delta$, let
\begin{equation}\lab{ph1}
\vp_0(t,s)(\cdot):=R^2\left(tv_1(R\cdot)+sv_2(R\cdot)\right),
\end{equation}
where $R$ is a positive constant to be determined later. Obviously,
for $t,s\in[0,1]$, $\vp_0(0,s)(\cdot)=R^2sv_2(R\cdot)\in P_\e^+$ and
$\vp_0(t,0)(\cdot)=R^2tv_1(R\cdot)\in P_\e^-$. Similar to Lemma
\ref{l5.3}, for small $\e>0$,
$$
I_\la(u)\ge I_1(u)\ge\frac{\e^2}{2}\ \mbox{for}\ u\in\Sigma:=\pa
P_\e^+\cap\pa P_\e^-,\ \la\in(0,1),
$$
which implies that
$c_\la^\ast:=\inf_{u\in\Sigma}I_\la(u)\ge\frac{\e^2}{2}$ for
$\la\in(0,1)$. Let $u_t=\vp_0(t,1-t)$ for $t\in[0,1]$. Then a direct
computation shows that
\begin{itemize}
\item [(i)] $\int_{\R^3}|\na u_t|^2=R^3\int_{\R^3}\big(t^2|\na
v_1|^2+(1-t)^2|\na v_2|^2$\big),
\item [(ii)] $\int_{\R^3}|u_t|^2=R\int_{\R^3}\big(t^2v_1^2+(1-t)^2v_2^2$\big),
\item [(iii)] $\int_{\R^3}|u_t|^\mu
=R^{2\mu-3}\int_{\R^3}\big(t^\mu |v_1|^\mu+(1-t)^\mu|v_2|^\mu$\big),
\item [(iv)]
$\int_{\R^3}\phi_{u_t}|u_t|^2=R^3\int_{\R^3}\phi_{\ti{u}_t}|\ti{u}_t|^2$,
where $\ti{u}_t=tv_1+(1-t)v_2$.
\end{itemize}
Since $F(t)\ge C_3|t|^\mu-C_4$ for any $t\in\R$, by (i)-(iv) we
have, for $\la\in(0,1)$ and $t\in[0,1]$,
\begin{align*}
I_\la(u_t)\le&\frac{1}{2}\|u_t\|_E^2
+\frac{1}{4}\int_{\R^3}\phi_{u_t}|u_t|^2
-\int_{B_{R^{-1}}(0)}F(u_t)\\
\le&\frac{R^3}{2}\int_{\R^3}\big(t^2|\na v_1|^2+(1-t)^2|\na
v_2|^2\big)+\frac{R^3}{4}\int_{\R^3}\phi_{\ti{u}_t}|\ti{u}_t|^2\\
&+\frac{R}{2}\max_{|x|\le 1}V(x)\int_{\R^3}\big(t^2v_1^2+(1-t)^2v_2^2\big)\\
&-C_3R^{2\mu-3}\int_{\R^3}\big(t^\mu |v_1|^\mu+(1-t)^\mu
|v_2|^\mu\big)+C_5R^{-3}.
\end{align*}
Since $\mu>3$, one sees that $I_\la(u_t)\rg-\iy$ as $R\rg\iy$
uniformly for $\la\in(0,1),\ t\in[0,1]$. Hence, choosing $R$
independent of $\la$ and large enough, we have
$$
\sup\limits_{u\in\vp_0(\pa_0\Delta)}I_\la(u)<c_\la^\ast:=\inf_{u\in\Sigma}I_\la(u),\
\la\in(0,1).
$$
Since $\|u_t\|_2\rg\iy$ as $R\rg\iy$ uniformly for $t\in[0,1]$, it
follows from Lemma \ref{l5.2} that $\vp_0(\pa_0\Delta)\cap
M=\emptyset$ for $R$ large enough. Thus $\varphi_0$ with a large $R$
independent of $\lambda$ satisfies the assumptions of Theorem A.
Therefore, the number
$$
c_\la=\inf\limits_{\varphi\in\G}\sup\limits_{u\in\varphi(\triangle)\setminus
W}I_\la(u),
$$
is a critical value of $I_\la$ satisfying $c_\la\ge c_\la^\ast$ and
there exists $u_\la\in E\setminus (P_\e^+\cup P_\e^-)$ such that
$I_\la(u_\la)=c_\la$ and $I_\la'(u_\la)=0$.

{\bf Step 3.} Passing to the limit as $\lambda \to 0$. By the
definition of $c_\la$, we see that for $\la\in(0,1)$,
$$c_\la\le c(R):=\sup\limits_{u\in\varphi_0(\triangle)}I(u)<\iy.$$
We claim that $\{u_\la\}_{\la\in(0,1)}$ is bounded in $E$. We first
have
\begin{equation}\lab{ff1}
c_\la=\frac{1}{2}\int_{\R^3}\big(|\na
u_\la|^2+V(x)u_\la^2\big)+\frac{1}{4}\int_{\R^3}\phi_{u_\la}u_\la^2
-\int_{\R^3}\big(F(u_\la)+\frac{\la}{r}|u_\la|^r\big)
\end{equation}
and
\begin{equation}\lab{ff2}
\int_{\R^3}\big(|\na u_\la|^2+V(x)u_\la^2+\phi_{u_\la}u_\la^2 -u_\la
f(u_\la)-\la|u_\la|^r\big)=0.
\end{equation}
Moreover, we have the Pohoz$\check{\rm a}$ev identity
\begin{align}\lab{ff3}
\frac{1}{2}\int_{\R^3}|\na
u_\la|^2&+\frac{3}{2}\int_{\R^3}V(x)u_\la^2+\frac{1}{2}\int_{\R^3}u_\la^2\na
V(x)\cdot x\nonumber\\
&+\frac{5}{4}\int_{\R^3}\phi_{u_\la}u_\la^2
-\int_{\R^3}\big(3F(u_\la)+\frac{3\la}{r}|u_\la|^r\big)=0.
\end{align}
Multiplying (\ref{ff1}) by $3-\frac{\mu}{2}$, (\ref{ff2}) by $-1$
and (\ref{ff3}) by $\frac{\mu}{2}-1$ and adding them up, we obtain
\begin{align}\lab{ff4}
(3-\frac{\mu}{2})c_\la&=(\frac{\mu}{4}-\frac{1}{2})\int_{\R^3}\big(2V(x)+\na
V(x)\cdot x\big)u_\la^2\nonumber\\
&\ \ \ +(\frac{\mu}{2}-\frac{3}{2})\int_{\R^3}\phi_{u_\la}u_\la^2
+(1-\frac{\mu}{r})\la\int_{\R^3}|u_\la|^r\nonumber\\
&\ \ \ +\int_{\R^3}\big(u_\la f(u_\la)-\mu F(u_\la)\big).
\end{align}
Using $(V_1)$, $(f_3)$ and the fact that $3<\mu\leq p<r$, one sees
that $\{\int_{\R^3}\phi_{u_\la}u_\la^2\}_{\la\in(0,1)}$ is bounded.
From this fact it can be deduced from $(f_3)$, (\ref{ff1}), and
(\ref{ff2}) that $\{u_\la\}_{\la\in(0,1)}$ is bounded in $E$.

Assume that up to a subsequence, $u_\la\rg u$ weakly in $E$ as
$\la\rg0^+$. By Remark \ref{r1}, $u_\la\rg u$ strongly in
$L^q(\R^3)$ for $q\in[2,6)$. Then, by Lemma \ref{l0},
$\phi_{u_\la}\rg \phi_u$ strongly in $\mathcal{D}^{1,2}(\R^3)$. By a
standard argument, we see that $I'(u)=0$ and $u_\la\rg u$ strongly
in $E$ as $\la\rg 0^+$. Moreover, the fact that $u_\la\in E\setminus
(P_\e^+\cup P_\e^-)$ and $c_\la\ge\frac{\e^2}{2}$ for $\la\in(0,1)$
implies $u\in E\setminus (P_\e^+\cup P_\e^-)$ and
$I(u)\ge\frac{\e^2}{2}$. Therefore, $u$ is a sign-changing solution
of (\ref{q2}).
\end{proof}

In the following, we prove the existence of infinitely many
sign-changing solutions to (\ref{q2}). We assume that $f$ is odd.
Thanks to Lemmas 4.1-4.4, we have seen that $P_\e^+$ is a
$G-$admissible invariant set for the functional $I_\la\ (0<\la<1)$
at any level $c$.

\begin{proof}[{\bf Proof Theorem
\ref{Th2} (Multiplicity part)}] {\bf Step 1.} We construct $\vp_n$
satisfying the assumptions in Theorem B. For any $n\in\mathbb{N}$,
we choose $\{v_i\}_1^n\subset C_0^\infty(\R^3)\setminus\{0\}$ such
that
%$v_i\not\equiv0$, $\mbox{suppt}(v_i)\subset
%B(0,1):=\{x\in\R^3:|x|\le 1\}$ for any $i$ and
$\mbox{supp}(v_i)\cap\mbox{supp}(v_j)=\emptyset$ for $i\not=j$.
Define $\varphi_n\in C(B_n,E)$ as
\begin{equation}\lab{ph2}
\varphi_n(t)(\cdot)=R_n^2\sum_{i=1}^nt_iv_i(R_n\cdot),
\quad t=(t_1,t_2,\cdots,t_n)\in B_n,
\end{equation}
where $R_n>0$ is a large number independent of $\la$ such that
$\varphi_n(\pa B_n)\cap (P_\e^+\cap P_\e^-)=\emptyset$ and
$$
\sup\limits_{u\in\varphi_n(\pa
B_n)}I_{\la}(u)<0<\inf\limits_{u\in\Sg}I_\la(u).
$$
Obviously, $\varphi_n(0)=0\in P_\e^+\cap P_\e^-$ and
$\varphi_n(-t)=-\varphi_n(t)$ for $t\in B_n$.

{\bf Step 2.} For any $j\in\mathbb{N}$ and $\la\in(0,1)$, we define
$$
c_j(\la)=\inf\limits_{B\in\G_j}\sup\limits_{u\in B\setminus
W}I_\la(u),
$$
where $W:=P_\e^+\cup P_\e^-$ and $\G_j$ is as in Theorem B.
According to Theorem B, for any $0<\la<1$ and $j\ge 2$,
$$
0<\inf_{u\in\Sigma}I_\la(u):=c^\ast(\la)\le c_j(\la)\rg\iy\ \mbox{as}\ j\rg\iy
$$
and there exists $\{u_{\la,j}\}_{j\ge 2}\subset E\setminus W$ such
that $I_\la(u_{\la,j})=c_j(\la)$ and $I_\la'(u_{\la,j})=0$.

{\bf Step 3.} In a similar way to the above, for any fixed $j\ge 2$,
$\{u_{\la,j}\}_{\la\in(0,1)}$ is bounded in $E$. Without loss of
generality, we assume that $u_{\la,j}\rg u_j$ weakly in $E$ as
$\la\rg 0^+$. Observe that $c_j(\la)$ is decreasing in $\la$. Let
$c_j=\lim_{\la\rg 0^+}c_j(\la)$. Clearly $c_j(\la)\leq c_j<\infty$
for $\la\in(0,1)$. Then we may assume that $u_{\la,j}\rg u_j$
strongly in $E$ as $\la\rg0^+$ for some $u_j\in E\setminus W$ such
that $I'(u_j)=0$, $I(u_j)=c_j$. Since $c_j\ge c_j(\la)$ and
$\lim_{j\to\infty}c_j(\la)=\iy$, $\lim_{j\to\infty}c_j=\iy$.
Therefore, equation \eqref{q2} and thus system (\ref{q1}) has
infinitely many sign-changing solutions. The proof is completed.
%Passing to the limit. Similar as above, for any
%fixed $j\ge 2$, $\sup\limits_{\la\in(0,1)}c_j(\la)<\iy$ and
%$\{u_{\la,j}\}_{\la\in(0,1)}$ is bounded in $E$. Without loss of
%generality, we assume that $u_{\la,j}\rg u_j$ weakly in $E$ as
%$\la\rg 0^+$. Observe that $c_j(\la)$ is nondecreasing as $\la\rg
%0^+$. Let $c_j=\lim\limits_{\la\rg 0^+}c_j(\la)$, then $c_j\ge
%c_j(\la)$ for $\la\in(0,1)$. Similar as above, $u_{\la,j}\rg u_j\in
%E\setminus W$ strongly in $E$ as $\la\rg0^+$ and $I'(u_j)=0,
%I(u_j)=c_j$. Meanwhile, by $c_j\ge c_j(\la)\rg \iy$ as $j\rg\iy$,
%system (\ref{q1}) has infinitely many sign-changing solutions. The
%proof is completed.
\end{proof}

\vskip0.1in

\noindent{\bf Acknowledgements:}
%would like to express his
%sincere thanks to Prof. Zhi-Qiang Wang for his kind support and
%fruitful discussion. He also
J. Zhang thanks Dr. Zhenping Wang and Prof. Huansong Zhou for
letting him know their work \cite{Zhou}.

\end{document}